\newtheorem{theorem}{Theorem}
\newtheorem{case}{Case}
\newtheorem{corollary}{Corollary}
\newtheorem{lemma}{Lemma}
\newtheorem{proposition}{Proposition}
\newenvironment{proof}{\begin{trivlist} \item[] {\em Proof:}}{\hfill $\Box$
                       \end{trivlist}}
\title{On Perfect and Quasiperfect Dominations in Graphs\thanks{Research supported by projects ESF EUROCORES programme EUROGIGA-ComPoSe IP04-MICINN, MTM2012-30951/FEDER, MTM2011-28800-C02-01, Gen. Cat. DGR 2014SGR46, Gen. Cat. DGR 2009SGR1387 and JA-FQM 305.}}
\author{José Cáceres$^1$ \and Carmen Hernando$^2$ \and Mercè Mora$^2$ \and Ignacio M. Pelayo$^2$ \and María Luz Puertas$^1$}
\date{}
\begin{document}
\maketitle

\footnotetext[1]{Universidad de Almería, Almería, Spain}
\footnotetext[2]{Universitat Politècnica de Catalunya, Barcelona, Spain}

\begin{abstract}

A subset $S\subseteq V$ in a graph $G=(V,E)$ is a $k$-quasiperfect dominating set (for $k\geq 1$) if every vertex not in $S$ is adjacent to at least one and at most $k$ vertices in $S$. The cardinality of a minimum $k$-quasiperfect dominating set in $G$ is denoted by $\gamma_ {\stackrel{}{1k}}(G)$. Those sets were first introduced by Chellali et al. (2013) as a generalization of the perfect domination concept and allow us to construct a decreasing chain of quasiperfect dominating numbers $ n \ge \gamma_ {\stackrel{}{11}}(G) \ge \gamma_ {\stackrel{}{12}}(G)\ge \ldots \ge \gamma_ {\stackrel{}{1\Delta}}(G)=\gamma(G)$ in order to indicate how far is $G$ from being perfectly dominated. In this paper we study properties, existence and realization of graphs for which the chain is short, that is, $\gamma_ {\stackrel{}{12}}(G)=\gamma (G)$. Among them, one can find cographs, claw-free graphs and graphs with extremal values of $\Delta(G)$.
\end{abstract}
\maketitle

\section{Introduction}

All the graphs considered here are  finite, undirected, simple, and connected. Given a graph $G=(V,E)$, the \emph{open neighborhood} of $v\in V$ is $N(v)=\{u\in V; uv\in E\}$ and the \emph{closed neighborhood} is $N[v]=N(v)\cup\{v\}$. The \emph{degree} $\deg(v)$ of a vertex $v$ is the number of neighbors of $v$, i.e., $\deg(v)=|N(v)|$. The \emph{maximum degree} of $G$, denoted by $\Delta(G)$, is the largest degree among all vertices of $G$. Similarly, it is defined the \emph{minimum degree} $\delta(G)$. If a vertex is adjacent to any other vertex then it is called \emph{universal}. For undefined basic concepts we refer the reader to introductory graph theoretical literature as~\cite{chlepi11}.

Given a graph $G$, a subset $S$ of its vertices is a \emph{dominating set} of $G$ if every vertex $v$ not in $S$ is adjacent to at least one vertex in $S$, or in other words $N(v)\cap S\neq\emptyset$. The \emph{domination number} $\gamma(G)$ is the minimum cardinality of a dominating set of $G$, and a dominating set of cardinality $\gamma(G)$ is called a \emph{$\gamma$-code}~\cite{hahesl}.

The most efficient way for a set $S$ to dominate occurs when every vertex not in $S$ is adjacent to exactly one vertex in $S$. In that case, $S$ is called a \emph{perfect dominating set}, which were introduced in~\cite{cohahela} and studied in~\cite{biggs,bode96,cahahe12,dd09,dw93,feho91,kwl14,list90} under different names. We denote by $\gamma_ {\stackrel{}{11}}(G)$ the minimum cardinality of a perfect dominating set of $G$ and called it the \emph{perfect domination number}. A perfect dominating set of cardinality $\gamma_ {\stackrel{}{11}}(G)$ is called a \emph{$\gamma_ {\stackrel{}{11}}$-code}.

Not always is possible to achieve perfection, so it is natural to wonder if we can obtain something close to it. In~\cite{chhahemc13}, the authors defined a generalization of perfect dominating sets called a \emph{$k$-quasiperfect dominating set} for $k\geq 1$ (\emph{$\gamma_ {\stackrel{}{1k}}$-set} for short). Such a set $S$ is a dominating set where every vertex not in $S$ is adjacent to at most $k$ vertices of $S$ (see also~\cite{dejter09, yang}). The \emph{$k$-quasiperfect domination number} $\gamma_ {\stackrel{}{1k}}(G)$ is the minimum cardinality of a $\gamma_ {\stackrel{}{1k}}$-set of $G$ and a \emph{$\gamma_ {\stackrel{}{1k}}$-code} is a $\gamma_ {\stackrel{}{1k}}$-set of cardinality $\gamma_ {\stackrel{}{1k}}(G)$. Certainly, $\gamma_ {\stackrel{}{11}}$-sets and  $\gamma_ {\stackrel{}{1\Delta}}$-sets are respectively perfect dominating and dominating sets. Thus, given a graph $G$ of order $n$ and maximum degree $\Delta$, one can construct the following decreasing chain of quasiperfect domination parameters:

$$ n \ge \gamma_ {\stackrel{}{11}}(G) \ge \gamma_ {\stackrel{}{12}}(G)\ge \ldots \ge \gamma_ {\stackrel{}{1\Delta}}(G)=\gamma(G) $$

For any graph $G$, the values in this chain give us an idea about how far is $G$ from being perfectly dominated. Particularly, in this work we focus our attention when the chain is \emph{short}, or in other words $\gamma_ {\stackrel{}{12}}(G) =\gamma(G)$. The next result, obtained in~\cite{chhahemc13}, provides a variety of families for which the chain is short.

\begin{theorem}\label{condicion}
If $G$ is a graph of order $n$ verifying at least one of the following conditions:
\begin{enumerate}
\item $\Delta(G)\geq n-3$;
\item $\Delta(G)\leq 2$;
\item $G$ is a cograph;
\item $G$ is a claw-free graph;
\end{enumerate}
then $\gamma_ {\stackrel{}{12}}(G) =\gamma(G)$.
\end{theorem}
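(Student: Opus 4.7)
The plan is to verify each of the four conditions separately, using throughout that the inequality $\gamma_{12}(G)\ge\gamma(G)$ is free from the displayed chain; so it suffices, for every $G$ in the stated families, to exhibit a dominating set of size $\gamma(G)$ that is simultaneously $2$-quasiperfect.

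\emph{Condition} (2) is immediate: if $\Delta(G)\le 2$, every vertex outside any dominating set $D$ has at most two neighbors in $D$ for trivial degree reasons, so every $\gamma$-code is already $2$-quasiperfect. \emph{Condition} (3) is almost as quick. Since $G$ is connected and a cograph on at least two vertices, it is a join of two smaller cographs, so any two vertices chosen one from each side of the outermost join form a dominating set of size $2$; hence $\gamma(G)\le 2$. When $\gamma(G)=1$ the dominating vertex is universal and the set is even perfect; when $\gamma(G)=2$ any dominating set has only two elements, so the $2$-quasiperfection condition is automatic.

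For \emph{Condition} (1) I split on $\Delta(G)\in\{n-1,n-2,n-3\}$. The first two subcases give $\gamma(G)\le 2$ just as in the cograph case: when $\Delta(G)=n-2$ with $V\setminus N[v]=\{u\}$, any neighbor $w$ of $u$ in $N(v)$ produces the dominating set $\{v,w\}$. The genuine content is the subcase $\Delta(G)=n-3$, where $V\setminus N[v]=\{u_1,u_2\}$ and we may assume no dominating set of size $\le 2$ exists. The plan is then to verify that $D=\{v,u_1,u_2\}$ is both a $\gamma$-code and $2$-quasiperfect: it dominates $G$ because $v$ dominates $V\setminus\{u_1,u_2\}$, and no $y\in N(v)\setminus D$ can be adjacent to both $u_1$ and $u_2$, since otherwise $\{v,y\}$ would dominate $G$, contradicting $\gamma(G)=3$.

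\emph{Condition} (4) is the main obstacle. The plan here is to choose, among all $\gamma$-codes of $G$, a set $D$ minimising the excess $\Phi(D):=\sum_{u\notin D}\max\{0,\,|N(u)\cap D|-2\}$ and to show $\Phi(D)=0$. If not, pick $u\notin D$ with three distinct neighbors $v_1,v_2,v_3\in D$; claw-freeness applied to $\{u,v_1,v_2,v_3\}$ forces an edge among the $v_i$, say $v_1v_2\in E$. One then performs a local swap (replacing $v_1$ by $u$, or trading $\{v_1,v_2\}$ for $\{u\}$ together with a vertex absorbing the private neighbors of $v_1,v_2$), using that in a $\gamma$-code every element has a private neighbor, and applying claw-freeness a second time to configurations of the form $\{w,v_1,v_2,u\}$ when a private neighbor $w$ of $v_1$ happens to lie outside $N[u]$. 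The resulting set is a $\gamma$-code with strictly smaller $\Phi$, contradicting the choice of $D$. Carrying out this private-neighbor case analysis cleanly so that every subcase produces the required improvement is the genuine technical difficulty of the proof.
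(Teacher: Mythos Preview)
The paper does not actually prove Theorem~\ref{condicion}: it is quoted as a result of Chellali, Haynes, Hedetniemi and McRae~\cite{chhahemc13}, with no argument given in the present paper. So there is no in-paper proof to compare against, and what you have supplied is an independent proof attempt.

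Your treatment of conditions (1)--(3) is correct and complete. In each case you either force $\gamma(G)\le 2$, which makes $2$-quasiperfection automatic for any $\gamma$-code, or (in the subcase $\Delta=n-3$, $\gamma=3$) you exhibit the explicit $\gamma_{12}$-code $\{v,u_1,u_2\}$ and verify it by the neat observation that a common neighbour of $u_1$ and $u_2$ inside $N(v)$ would yield a dominating pair.

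For condition (4) your overall strategy---pick a $\gamma$-code minimising an excess functional and improve it by a local exchange using claw-freeness---is indeed the standard route taken in~\cite{chhahemc13}. However, the concrete step you sketch does not go through as written. After obtaining the edge $v_1v_2$, you propose to apply claw-freeness a second time to $\{w,v_1,v_2,u\}$ when $w$ is a private neighbour of $v_1$ with $w\notin N[u]$; but the only candidate centre is $v_1$, and the leaves $w,v_2,u$ are \emph{not} independent since $uv_2\in E$ by hypothesis. So this configuration is never a claw, and neither the swap $v_1\mapsto u$ nor the trade $\{v_1,v_2\}\mapsto\{u,\cdot\}$ has been shown to remain dominating. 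You correctly identify this as ``the genuine technical difficulty'', but as it stands the claw-free case is only a plan; a working version requires a more careful extremal choice (for instance minimising the total number of $D$--$(V\setminus D)$ edges, as in~\cite{chhahemc13}) and a private-neighbour analysis that does not rely on the non-claw $\{w,v_1,v_2,u\}$.
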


The paper is organized as follows: in the next section we introduce several well-known and technical results that will be useful for the rest of the paper. The next two sections deal with the study of the cases of Theorem~\ref{conditions}, thus  Section~\ref{extremal} is devoted to the extremal degree families and Section~\ref{freegraphs} to cographs and claw-free graphs.

\section{Basic and general results}\label{basics}

In this section, we review some results founded in the literature about quasiperfect parameters as well as introduce some basic technical results that will be useful in the rest of the paper. The next table summarizes the values of parameters under consideration for some simple families of graphs:

\begin{table}[htbp]
\begin{center}
\begin{tabular}{ccccccc}
   & paths & cycles & cliques & stars & bicliques & wheels \\
\hline\noalign{\smallskip}
$G$  & $P_n$  & $C_{n}$ & $K_n$ & $K_{1,n-1}$ & $K_{p,n-p}$ &$W_{n}$  \\
\noalign{\smallskip}\hline\noalign{\smallskip}
 $n$ & $n\geq 3$ & $n\geq 4$ & $n\geq 2$ & $n\geq 4$ & $2\leq p\leq n-p$ & $n\geq 3$\\
\noalign{\smallskip}\hline\noalign{\smallskip}
$\Delta(G)$ & $2$ & $2$ & $n-1$ & $n-1$ & $n-p$  &  $n-1$ \\\hline
$\gamma_ {\stackrel{}{11}}(G)$ & $\lceil\frac{n}{3}\rceil$ & $\lceil\frac{2n}{3}\rceil-\lfloor\frac{n}{3}\rfloor$ & $1$ & $1$ & $2$  &  $1$ \\
$\gamma_ {\stackrel{}{12}}(G)$ & $\lceil\frac{n}{3}\rceil$ & $\lceil\frac{n}{3}\rceil$ & $1$ & $1$ & $2$  &  $1$ \\
$\gamma(G)$    &      $\lceil\frac{n}{3}\rceil$     & $\lceil\frac{n}{3}\rceil$ & $1$ & $1$ & $2$ & $1$ \\
\noalign{\smallskip}\hline\noalign{\smallskip}
\end{tabular}
\caption{Quasiperfect domination parameters of some basic graphs.}
\label{tab:ghs}
\end{center}
\end{table}

The next result provides a number of basic technical facts:
\begin{proposition}\label{technical}
Let $G=(V,E)$ a graph of order $n$. In the following, $\Delta(G)=\Delta$, $\gamma(G)=\gamma$, $\delta(G)=\delta$ and let $k$ and $r$ be two positive integers such that $1\leq k\leq \Delta$ and $k\leq r\leq n$:
\begin{enumerate}
\item If $\gamma\leq \Delta$, then $\gamma_ {\stackrel{}{1\gamma}}(G)=\ldots=\gamma_ {\stackrel{}{1\Delta}}(G)=\gamma$;
\item $\gamma_ {\stackrel{}{1\delta}}(G)<n$;
\item \label{three} $\gamma_ {\stackrel{}{11}}(G)=1$ if and only if $\Delta=n-1$.
\item $\gamma_ {\stackrel{}{11}}(G)\leq n-\ell(G)$ where $\ell(G)$ is the number of vertices of degree one.
\item Let $S$ be a $\gamma_ {\stackrel{}{1k}}$-set of $G$ and $v \in V$. If $|N(v)\cap S| >k$ then $v \in S$.
\item Let $S$ be a $\gamma_ {\stackrel{}{1k}}$-set of $G$ and let $K$ be a clique of $G$. If $|V(K)\cap S| > k$ then $V(K)\subseteq S$.
\end{enumerate}
\end{proposition}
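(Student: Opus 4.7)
The six items are independent technical facts, and I would handle them in order, relying on the chain of inequalities $\gamma_{11}(G)\ge\gamma_{12}(G)\ge\ldots\ge\gamma_{1\Delta}(G)=\gamma$ stated in the introduction together with the definition of a $k$-quasiperfect dominating set.

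For item (1), my plan is to take a $\gamma$-code $S$ and simply observe that for every $v\notin S$ one has $|N(v)\cap S|\le|S|=\gamma$, so $S$ is itself a $\gamma_{1\gamma}$-set; this gives $\gamma_{1\gamma}(G)\le\gamma$, and the reverse inequality is immediate from the chain. All the intermediate values are then forced to equal $\gamma$. For item (2), taking any vertex $v$ of degree $\delta$ and setting $S=V\setminus\{v\}$ works: $v$ has exactly $\delta$ neighbors in $S$, so $S$ is a $\delta$-quasiperfect dominating set of cardinality $n-1$. For item (3), the reverse direction uses that a universal vertex $u$ makes $\{u\}$ a perfect dominating set, while the forward direction uses that if $\{u\}$ is perfect dominating then every other vertex must be adjacent to $u$, forcing $\deg(u)=n-1$.

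Item (4) is the one requiring a tiny bit of care. The natural candidate is $S=V\setminus L$ where $L$ is the set of degree-one vertices. In a connected graph with $n\ge 3$ no two leaves are adjacent, so each leaf has its unique neighbor in $S$; hence every vertex of $V\setminus S$ has exactly one neighbor in $S$, making $S$ a perfect dominating set of size $n-\ell(G)$. The only subtlety is the degenerate case $G=K_2$, where the inequality fails in its stated form; the statement should be read as implicitly excluding this case (or as $n\ge 3$), and I would flag this in the write-up.

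Items (5) and (6) are essentially definition-chasing. Item (5) is just the contrapositive of the defining property of a $\gamma_{1k}$-set: a vertex outside $S$ has at most $k$ neighbors in $S$. For item (6), I would argue by contradiction: if some $v\in V(K)\setminus S$ exists, then since $K$ is a clique, $v$ is adjacent to every other vertex of $K$, so $|N(v)\cap S|\ge|(V(K)\setminus\{v\})\cap S|=|V(K)\cap S|>k$, and item (5) then forces $v\in S$, a contradiction. None of these six steps presents a real obstacle; if anything, the main point is simply to identify, in each case, the right candidate set and to record which direction of the chain of inequalities is being invoked.
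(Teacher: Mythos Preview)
Your argument is correct and, for items (1)--(3), (5) and (6), essentially identical to the paper's. The one genuine divergence is item (4): the paper instead argues that if a $\gamma_{11}$-code $S$ contains a leaf $u$ with unique neighbor $v$, then $(S\setminus\{u\})\cup\{v\}$ is again a $\gamma_{11}$-code, the idea being to iterate until the code avoids all leaves and hence lies inside $V\setminus L$. Your direct route of exhibiting $V\setminus L$ itself as a perfect dominating set is both shorter and more robust; in fact the paper's swap step is not quite right as stated (for $P_4=a\,b\,c\,d$ the set $\{a,d\}$ is a $\gamma_{11}$-code, but replacing $a$ by $b$ yields $\{b,d\}$, in which $c$ sees two vertices), whereas your argument sidesteps this entirely. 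Your remark that the inequality fails for $K_2$ and should be read under $n\ge 3$ is also well taken.
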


\begin{proof}
\begin{enumerate}
\item Assume $S$ is a $\gamma$-code and let $i$ be any integer such that $\gamma \leq i\leq \Delta$. Then, for any vertex $v$ not in $S$ it is clear that $1 \le |N(v) \cap S| \le \gamma$. Hence, $S$ is an $\gamma_ {\stackrel{}{1i}}$-set, and consequently $\gamma_ {\stackrel{}{1i}}(G)\le \gamma$.
\item Now let $v\in V$ with $\deg(v)=\delta$. Since $1\le|N(v)\cap S|\leq |N(v)|=\delta$, the set $S=V\setminus \{v\}$ is a $\gamma_ {\stackrel{}{1\delta}}$-set and consequently, $\gamma_ {\stackrel{}{1\delta}}(G)\le n-1<n$.
\item Assume that $S=\{v\}$ is a $\gamma_ {\stackrel{}{11}}$-code of $G$. Then $v$ is a universal vertex, i.e., $\deg(v)=n-1$. Conversely, let $v\in V$ with $\deg(v)=n-1$. Since $N(v)=V\setminus \{v\}$, the set $\{v\}$ is a $\gamma_ {\stackrel{}{11}}$-code.
\item Let $u$ be a vertex with a unique neighbor $v$. If $S$ is a $\gamma_ {\stackrel{}{11}}$-code containing $u$, then $(S\setminus \{u\})\cup \{v\}$ is also a $\gamma_ {\stackrel{}{11}}$-code.
\item If $S$ is a $\gamma_ {\stackrel{}{1k}}$-set, then no vertex outside $S$ can have more than $k$ neighbors in $S$. So if a vertex $v$ verifies $N|(v)\cap S|>k$ then it belongs to $S$.
\item Similarly as the previous case, if there exists a clique $K$ in $G$ with more than $k$ vertices in $S$, then any vertex in $V(K)\setminus S$, if exists, has more than $k$ neighbors in $S$ so they should be in $S$ or $S$ cannot be a $\gamma_ {\stackrel{}{1k}}$-set. Consequently $V(K)\subseteq S$.
\end{enumerate}
And the proof is complete.
\end{proof}

From the computational point of view, it is important that $k$-quasiperfect domination numbers can be expressed in terms of an integer program. The formulation is as follows: given a vertex subset $S\subseteq V(G)=\{v_1,\ldots ,v_n\}$ the characteristic column vector $X_S=(x_i)$ for $1\leq i\leq n$ satisfies $x_i=1$ if $v_i\in S$ and $x_i=0$ otherwise. Now $S$ is a dominating set if $|N(v_i)\cap S|\geq 1$ for $v_i\notin S$ or, equivalently, if $N\cdot X_S\geq 1_n-X$ where $1_n$ is an $n$ column vector with all its components equal to one (see~\cite{hahesl}). The set $S$ is a $k$-quasiperfect dominating set if it is dominating and $|N(v_i)\cap S|\leq k$ for $v_i\notin S$. Since clearly, $|N(v_i)\cap S|\leq n-1$ for $v_i\in S$, this two conditions can be expressed as $N\cdot X\leq k1_n + (n-k-1)X$. Thus the final formulation for the $k$-quasiperfect domination number $\gamma_ {\stackrel{}{1k}}(G)$:
$$
\begin{aligned}
\gamma_ {\stackrel{}{1k}}(G)=&\min \sum_{i=1}^n x_i\\
\textrm{subject to } & N\cdot X\geq 1_n-X \\
& N\cdot X\leq k1_n + (n-k-1)X\\
& \textrm{with } x_i\in\{0,1\}
\end{aligned}
$$

\section{Extremal degree families}\label{extremal}
Extremal values of the maximum degree $\Delta(G)$ leads to a short quasiperfect domination chain as it was stated in Theorem~\ref{condicion}. In this section, we examine the relationship between extremal values of the maximum degree and the quasiperfect domination parameters. Note that if $\Delta(G)\leq 2$, then $G$ is claw-free which will be considered in Subsection~\ref{clawfreeg}. On the other hand, $\Delta(G)=n-1$ only for graphs with a universal vertex. Hence, this section is divided into the following remaining extremal cases: $\Delta(G)=n-2$, $\Delta(G)=n-3$ and $\Delta(G)=3$.

\begin{figure}[htbp]
  \begin{centering}
    \subfigure[$\gamma_ {\stackrel{}{11}}(G)=2$.\label{2a}]{\includegraphics[width=0.17\textwidth]{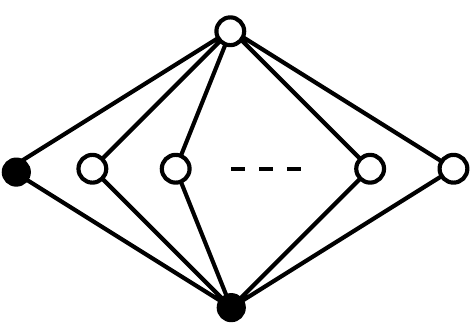}} \hspace{0.25cm}
    \subfigure[$\gamma_ {\stackrel{}{11}}(G)=3$.\label{2b}]{\includegraphics[width=0.15\textwidth]{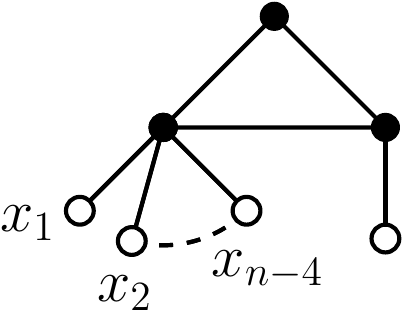}} \hspace{0.25cm}
    \subfigure[$\gamma_ {\stackrel{}{11}}(G)=n$.\label{2c}]{\includegraphics[width=0.17\textwidth]{{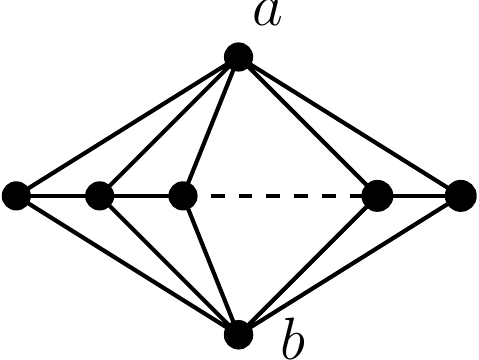}}}\hspace{0.25cm}
    \subfigure[$\gamma_ {\stackrel{}{11}}(G)=n-1$.\label{2d}]{\includegraphics[width=0.17\textwidth]{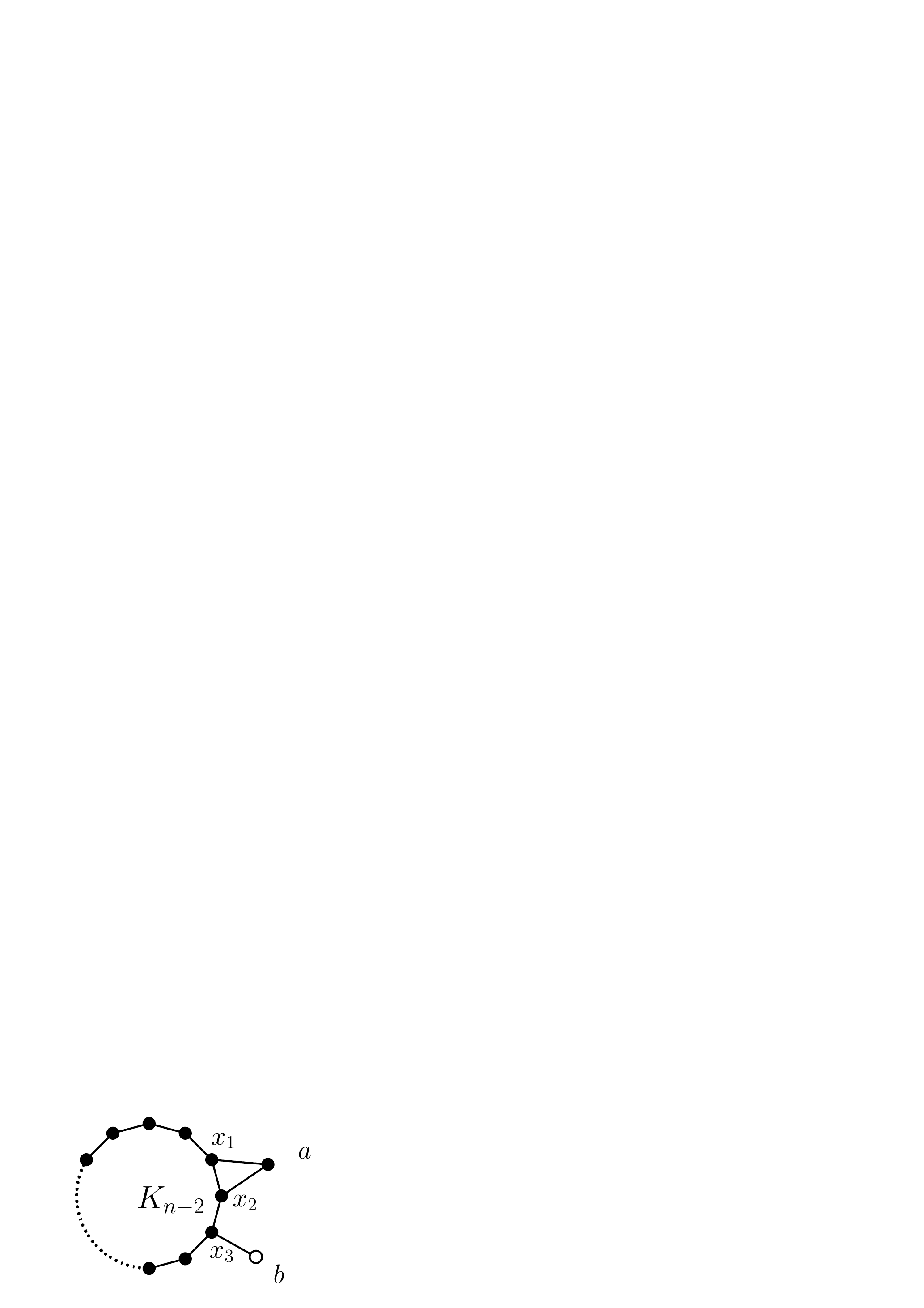}} \hspace{0.25cm}
    \subfigure[$n-\gamma_ {\stackrel{}{11}}(G)\geq 2$.\label{2e}]{\includegraphics[width=0.17\textwidth]{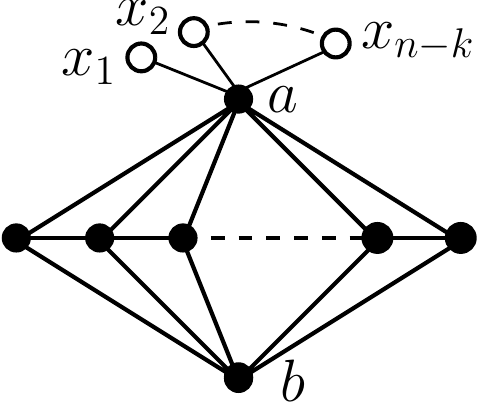}}
\caption{Graphs of order $n$, maximum degree $\Delta =n-2$ and any possible value $k$ of $\gamma_ {\stackrel{}{11}}$.}\label{figdeltan-2}
  \end{centering}
\end{figure}

\subsection{$\Delta(G)=n-2$}\label{deltan-2}
As it was pointed out in Theorem~\ref{condicion}, if $\Delta(G)=n-2$ then $\gamma_ {\stackrel{}{12}}(G)=\gamma(G)$ and in this case $\gamma(G)=2$. Now let us see whether or not there exists a graph under these conditions with any value of $\gamma_ {\stackrel{}{11}}(G)$. The next result answers this question:

\begin{theorem}
Let $k,n$ be positive integers such that $n\ge 4$ and $2\leq k\leq n$. Then, there exists a graph $G$ of order $n$ such that  $\Delta(G)=n-2$ and $\gamma_ {\stackrel{}{11}}(G)=k$ if and only if $(k,n)\notin \{(3,4),(4,4),(4,5),(5,5)\}$.
\end{theorem}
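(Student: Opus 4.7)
The plan has two halves, matching the biconditional. I treat necessity (the excluded pairs are genuinely impossible) first, then sufficiency (every other admissible pair is realizable).

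\textbf{Necessity.} For $n=4$, the hypothesis $\Delta(G)=n-2=2$ together with connectedness forces $G\in\{P_4,C_4\}$; Table~\ref{tab:ghs} gives $\gamma_{11}(P_4)=\gamma_{11}(C_4)=2$, which excludes $k\in\{3,4\}$. For $n=5$ with $\Delta(G)=3$, I would prove the bound $\gamma_{11}(G)\le 3$, which excludes $k\in\{4,5\}$. Fix a degree-$3$ vertex $v$, its unique non-neighbor $u$, and write $W=N(v)=\{w_1,w_2,w_3\}$, so $\emptyset\neq N(u)\subseteq W$. A short case analysis, splitting on $|N(u)|\in\{1,2,3\}$ and on the edge pattern of $G[W]$ compatible with $\Delta(G)=3$ (at most $2^3=8$ patterns), finds in each case a perfect dominating set of size at most $3$: typical choices are $\{v,w_i\}$ with $w_i\in N(u)$ isolated in $G[W]$, $\{u,w_i\}$ when some $w_i$ dominates $W\setminus\{w_i\}$, or a triple such as $\{v,w_i,w_j\}$ when neither pair construction works.

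\textbf{Sufficiency.} Figure~\ref{figdeltan-2} supplies the five templates. Every construction keeps the distinguished vertex $v$ of degree $n-2$ paired with its unique non-neighbor $u$, and tunes $\gamma_{11}$ by choosing the adjacencies inside $W=N(v)$ and between $u$ and $W$. Explicitly: for $k=2$ (Figure~\ref{2a}) make some neighbor of $u$ isolated in $G[W]$, so that $\{v,w_i\}$ is a perfect pair; for $k=3$ (Figure~\ref{2b}) use a slightly richer pattern that blocks every perfect pair but admits a perfect triple; for the extremes $k=n$ (Figure~\ref{2c}, exemplified by $K_{2,2,2}$ when $n=6$ and by analogous multipartite constructions otherwise) and $k=n-1$ (Figure~\ref{2d}) use graphs whose high symmetry forces every proper candidate perfect dominating set to violate Proposition~\ref{technical}(5); and for intermediate values $4\le k\le n-2$ (Figure~\ref{2e}) attach to the backbone a gadget whose additive effect on $\gamma_{11}$ is exactly $k-2$. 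In each constructed graph the upper bound $\gamma_{11}(G)\le k$ is witnessed by an explicit set; the matching lower bound is enforced by the propagation principles of Proposition~\ref{technical}(5)--(6): any vertex with more than one neighbor in a proposed perfect dominating set must itself lie in the set, and this membership cascades through cliques.

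The main obstacle is the lower-bound verification in the sufficiency direction for intermediate $k$, i.e., ruling out \emph{every} perfect dominating set of smaller cardinality in the constructed graph. This requires a careful propagation argument ensuring that the gadget structure forces exactly the desired number of vertices into any candidate perfect dominating set, neither more nor less, so that the cardinality lands precisely on $k$ for every admissible $(k,n)$. The case analysis for $n=5$ in the necessity direction is finite and routine, but must be carried out exhaustively over the edge patterns on three vertices combined with each choice of $N(u)\subseteq W$.
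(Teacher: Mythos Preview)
Your overall architecture---enumeration for the small excluded cases, explicit constructions indexed by $k$ for the realizable pairs---matches the paper's. Your treatment of $n=4$ is identical; for $n=5$ you propose a structured case split on $|N(u)|$ and the edge pattern of $G[W]$, whereas the paper simply notes there are eight such graphs and checks each. Both routes are fine and equally routine.

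The gap is in the sufficiency direction. You reference Figures~\ref{2a}--\ref{2e} but, writing blind, you guess at their content, and in one case incorrectly: for $k=n$ the paper does not use multipartite graphs but rather $P_{n-2}\vee\overline{K_2}$, and its proof that $\gamma_{11}=n$ is a three-case argument depending on which pair of vertices a hypothetical $\gamma_{11}$-set contains. (Your $K_{2,2,2}$ example happens to work for $n=6$, but ``analogous multipartite constructions'' is not what the paper does for larger $n$, and you give no argument that such constructions would achieve $\gamma_{11}=n$.) More seriously, you explicitly flag the lower-bound verifications---the heart of the proof---as ``the main obstacle'' and leave them undone. The paper carries these out in full: for $k=n-1$ it argues that any $\gamma_{11}$-set must pick up two vertices of the clique $K_{n-2}$ and hence all of it; for $4\le k\le n-2$ it attaches $n-k$ pendants to the graph $P_{k-2}\vee\overline{K_2}$ on vertex set $W$, then shows that any $\gamma_{11}$-set $S$ must contain $W$ entirely, since $|W\cap S|=1$ would force all the pendants into $S$ and give vertex $a$ two neighbors in $S$. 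None of this is deep, but it is the actual content of the proof, and your proposal defers rather than supplies it.
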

\begin{proof}
The only graphs of order $4$ and $\Delta(G)=2$ are the cycle $C_4$ and the path $P_4$, and in both cases $\gamma_ {\stackrel{}{11}}(G)=2$. On the other hand, there are eight graphs of order $5$ and maximum degree $\Delta(G)=3$, all of them having either $\gamma_ {\stackrel{}{11}}(G)=2$ or $\gamma_ {\stackrel{}{11}}(G)=3$.

Thus, the only remaining case is when $n\geq6$.

When $k=2$, the graph $G=K_{2,n-2}$ (see Figure~\ref{2a}) has $\Delta(G)=n-2$ and $\gamma_ {\stackrel{}{11}}(G)=2$ since the black vertices form a $\gamma_ {\stackrel{}{11}}$-code.

For the case $k=3$, we consider the graph in Figure~\ref{2b} where the three black vertices are a $\gamma_ {\stackrel{}{11}}$-code.

For the case $k=n$, we construct the graph $P_{n-2}\vee \overline{K_2}$ showed in Figure~\ref{2c}. Let $V(P_{n-2})=\{1,\dots, n-2 \}$ and $V(\overline{K}_2)=\{a,b \}$. Obviously, $\Delta(G)=n-2$. Let us see that $\gamma_ {\stackrel{}{11}}(G)=n$, that is, the unique $\gamma_ {\stackrel{}{11}}$-set is $V(G)$.
Let $S$ be a $\gamma_ {\stackrel{}{11}}$-set of $G$, then $\vert S \vert\geq2$. We distinguish tree cases:
\begin{itemize}
  \item  If $\{a,b \} \subset S$ then $\vert N(i)\cap S \vert >1$ for all $i \in V(P_{n-2})$. By Proposition~\ref{technical}, $V(P_{n-2}) \subset S$, that is, $S=V(G)$.
  \item  If $\{i,j \} \subset S$, for some $\{i,j \}\subset V(P_{n-2})$, then $\vert N(a)\cap S \vert >1$ and $\vert N(b)\cap S \vert >1$. Then $ \{a,b \} \subset S$. By previous item, then  $S=V(G)$.
  \item  If $\{a,i \} \subset S$ for some $i \in P_{n-2}$, then $\vert N(i+1)\cap S \vert >1$ or $\vert N(i-1)\cap S \vert >1$. In any case $\vert V(P_{n-2}) \cap S \vert>1$, and then, by previous item, $S=V(G)$.
\end{itemize}

The following case occurs when $k=n-1$. We construct the graph in Figure~\ref{2d}. Note that the set of black vertices is a $\gamma_ {\stackrel{}{11}}$-set with cardinality $n-1$. We claim that any $\gamma_ {\stackrel{}{11}}$-code should contain those vertices. Let $S$ be a $\gamma_ {\stackrel{}{11}}$-code. Since $S$ is dominating, $S\cap \{x_1,x_2,a\}\neq \emptyset$ and $S\cap \{x_3,b\}\neq \emptyset$. Observe that $\{a,b\}$ is not a dominating set, and if $S$ contains two vertices of $V(K_{n-2})$ then it contains all the vertices of the clique plus the vertex $a$. Thus, it only remains two check the cases $\{x_1,b\}\subseteq S$ (the case $\{x_2,b\}\subseteq S$ is analogous) and $\{x_3,a\}\subseteq S$. However in the former case, we have that $x_3\in S$ and in the later, $x_1\in S$. Hence in any case, there are two vertices of the clique in $S$ and consequently all the black vertices belong to $S$.

Finally, suppose $n-k\geq 2$ and $k\geq 4$. The graph we consider is the one depicted in Figure~\ref{2e}. We denote $W=V(P_{k-2}\vee \overline{K_2})$ and $V(\overline{K_2})=\{a,b \}$. Obviously $\Delta(G)=n-2$, $\{a,b\}$ is a $\gamma$-code and also a $\gamma_ {\stackrel{}{1k}}$-code for $k\geq 2$. On the other hand, $W$ is a $\gamma_ {\stackrel{}{11}}$-set of $G$ and $|W|=k$. It only remains to prove that any $\gamma_ {\stackrel{}{11}}$-set of $G$ contains $W$.

Let $S$ be a $\gamma_ {\stackrel{}{11}}$-set of $G$, then its cardinality is at least $2$. As $S$ dominates all the vertices, $\vert W \cap S \vert\geq1$. If  $\vert W \cap S \vert\geq2$ then, analogously as an above case, we obtain $W\subset S$. We suppose that $\vert W \cap S \vert=1$. In this case, as $V \setminus W$ does not dominate $b$, then it is necessary that $W \cap S\subset N[b]$. Therefore, $a\notin S$ and  $\{x_1, \dots, x_{n-k}\}\subset S$ because $S$ is a dominating set. But in this case, $\vert N(a)\cap S \vert>1$, that is a contradiction. Finally, we conclude that $\gamma_ {\stackrel{}{11}}(G)=\vert W \vert=k$.
\end{proof}

\begin{figure}[htbp]
  \begin{centering}
    \subfigure[$P_5$ and $C_5$.\label{c5}]{\includegraphics[width=0.16\textwidth]{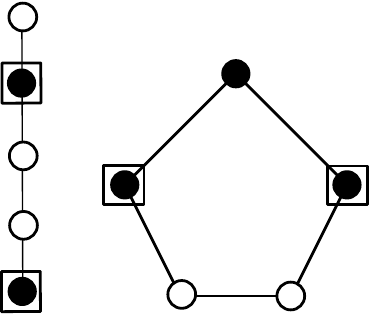}} \hspace{1.25cm}
    \subfigure[$|V(G)|=n\geq 6$, $\gamma =\gamma_ {\stackrel{}{11}}=2$.\label{22n}]{\includegraphics[width=0.22\textwidth]{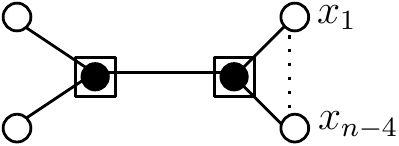}} \hspace{1.25cm}
    \subfigure[$|V(G)|=n\geq 6$, $\gamma =2$, $\gamma_ {\stackrel{}{11}}=3$.\label{23n}]{\includegraphics[width=0.22\textwidth]{{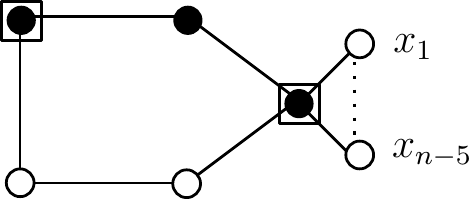}}}
\caption{Some graphs with order $n\geq 5$ and maximum degree $n-3$. The set of squared vertices in each graph is a $\gamma$-code whereas black vertices form a $\gamma_ {\stackrel{}{11}}$-code.}\label{22nand23n}
  \end{centering}
\end{figure}

\subsection{$\Delta(G)=n-3$}\label{deltan-3}
For this case and as in the previous subsection, by Theorem~\ref{condicion} it holds $\gamma_ {\stackrel{}{12}}(G)=\gamma(G)$. However, either $\gamma(G)=2$ or $3$, and for both cases we characterize whether or not there exists a graph for any value of $\gamma_ {\stackrel{}{11}}(G)$.

\noindent
\begin{table}[htbp]
\begin{tabular}{|c|c|c|c|c|c|}
\hline
\backslashbox{$\gamma_ {\stackrel{}{11}}$}{n} & 7 & 8 & 9 & 10 & $\geq$11 \\
\hline
4 & \multicolumn{5}{l|}{\includegraphics[width=2.5cm]{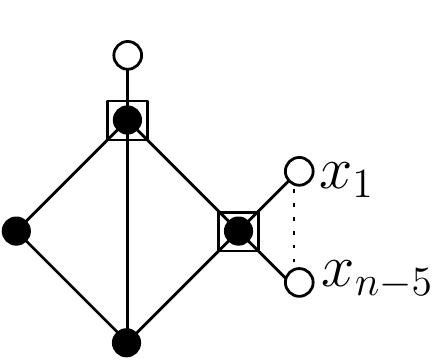}  \ \ \ $n\geq 7$  }\\
\hline
5 & \includegraphics[width=2.2cm]{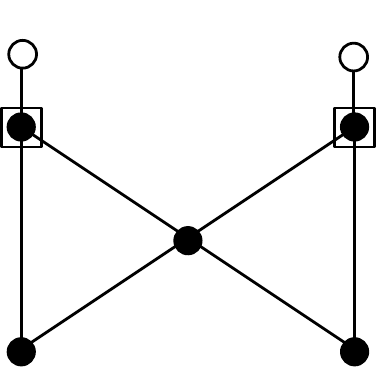} &\includegraphics[width=2.5cm]{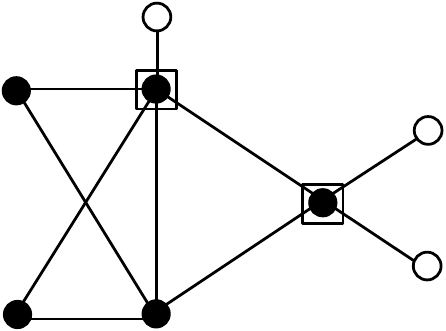} &  \multicolumn{3}{l|}{ \includegraphics[width=2.5cm]{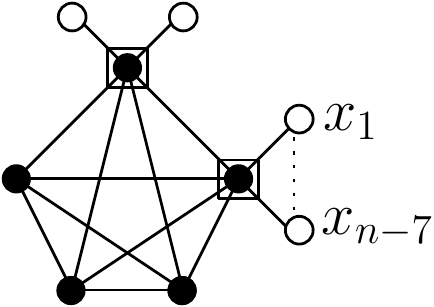} \ \ $n\geq 9$}\\
\hline
6 &\includegraphics[width=2.5cm]{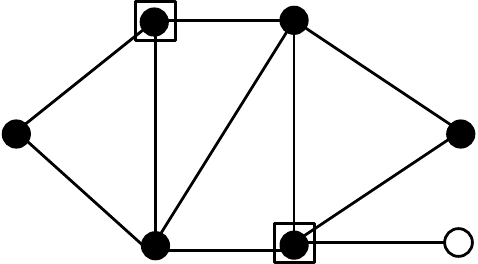} &\includegraphics[width=2.2cm]{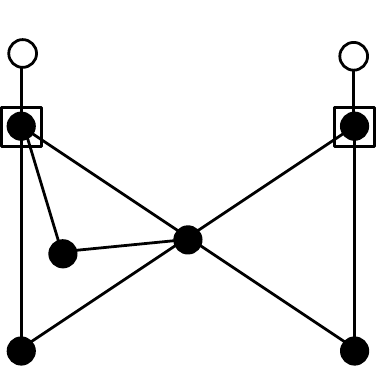} & \includegraphics[width=2.5cm]{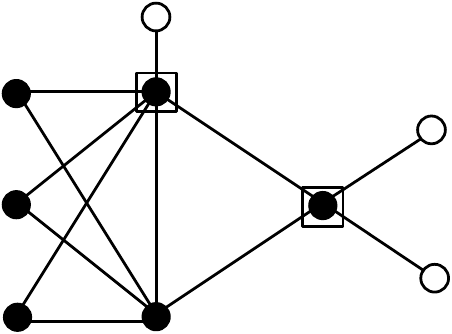} & \multicolumn{2}{l|}{\includegraphics[width=2.5cm]{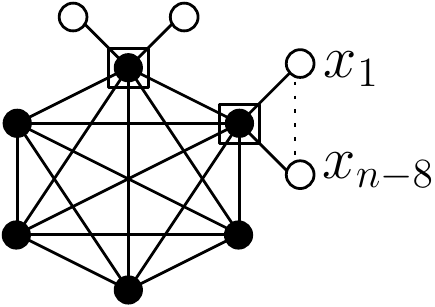} \ \ \ $n\geq 10$}\\
\hline
7& \includegraphics[width=2.5cm]{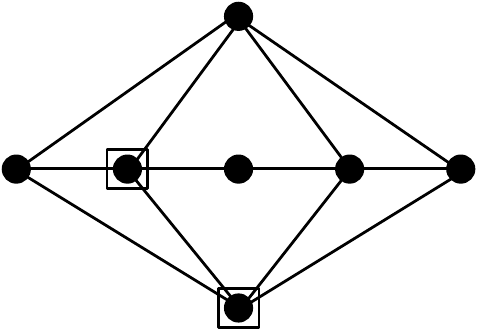}  &\includegraphics[width=2.5cm]{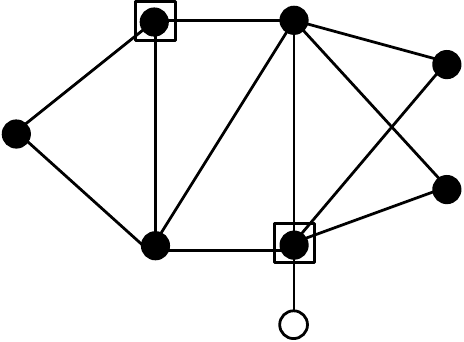}  & \includegraphics[width=2.5cm]{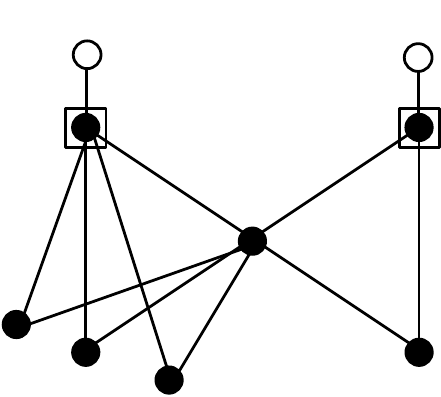}    & \includegraphics[width=2.5cm]{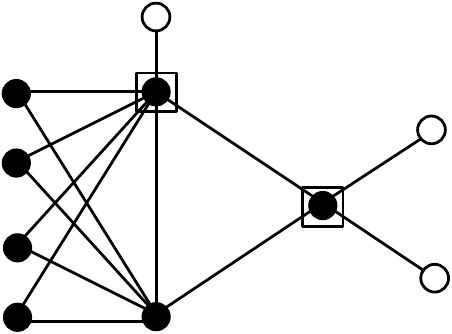}  & \includegraphics[width=2.5cm]{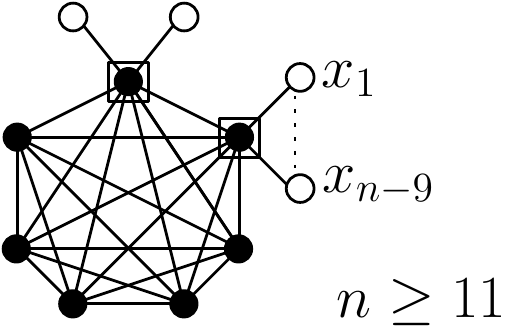}\\
\hline
$\geq$8 & \pbox{20cm}{no sense\\  \\  } & \multicolumn{4}{l|}{ \includegraphics[width=5cm]{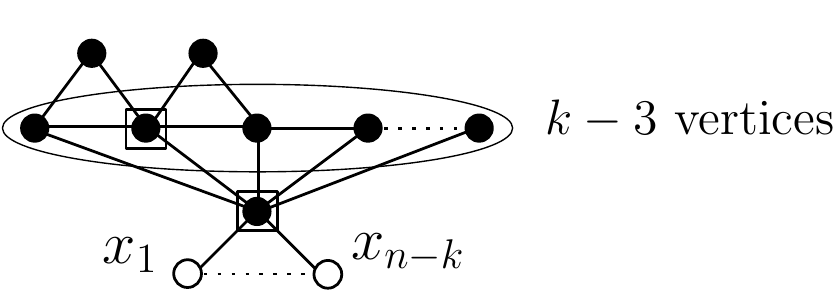} \ \ \ $8\leq \gamma_ {\stackrel{}{11}}=k\leq n$} \\
\hline
\end{tabular}
\caption{Examples of graphs with $n\geq 7$, $\Delta(G)=n-3$, $\gamma=2$ and $\gamma_ {\stackrel{}{11}}\geq 4$. The pair of squared vertices in each graph is a $\gamma$-code and black vertices form a $\gamma_ {\stackrel{}{11}}$-code. Observe that every $\gamma_ {\stackrel{}{11}}$-code is formed by all non-leaf vertices.}
\label{tablegamma2}
\end{table}

\begin{theorem}
Let $(k,n)$ be a pair of integers such that $2\leq k\leq n$ and $n\geq 5$. Then, there exists a graph $G$ such that $|V(G)|=n$, $\Delta (G)=n-3$, $\gamma (G)=2$ and $\gamma_ {\stackrel{}{11}}(G)=k$, if and only if $(k,n)\notin \{(4,5),(5,5),(4,6),(5,6),(6,6)\}$.
\end{theorem}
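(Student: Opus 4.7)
The plan splits into two parts: non-existence for the five forbidden pairs and explicit constructions for the remaining admissible pairs. For the non-existence direction, when $n=5$ the constraint $\Delta(G)=n-3=2$ forces $G\in\{P_5,C_5\}$, and Table~\ref{tab:ghs} gives $\gamma_{\stackrel{}{11}}(P_5)=2$ and $\gamma_{\stackrel{}{11}}(C_5)=3$, eliminating $k\in\{4,5\}$. When $n=6$, the hypotheses $\Delta(G)=3$ and $\gamma(G)=2$ reduce the possibilities to a short finite list: every such graph arises by attaching four additional vertices to a dominating pair $\{u_1,u_2\}$ subject to $\deg\le 3$. An exhaustive but brief case check shows $\gamma_{\stackrel{}{11}}(G)\in\{2,3\}$ in every case, eliminating $k\in\{4,5,6\}$.

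For the existence direction I would exhibit explicit realizing graphs for each admissible pair. When $n=5$, take $P_5$ for $k=2$ and $C_5$ for $k=3$ (Figure~\ref{c5}). When $n\geq 6$ and $k\in\{2,3\}$, use the constructions in Figures~\ref{22n} and~\ref{23n}; in each, two adjacent central vertices of degree $n-3$ form a $\gamma$-code, while the attached leaves and twin structure pin the perfect domination number to the claimed value. When $n\geq 7$ and $4\leq k\leq n$, use the families displayed in Table~\ref{tablegamma2}, where a distinguished set of $k$ non-leaf vertices is a $\gamma_{\stackrel{}{11}}$-code and every $\gamma_{\stackrel{}{11}}$-code must contain it. In each case one checks straightforwardly that $|V(G)|=n$, $\Delta(G)=n-3$, and that the squared pair of vertices gives a $\gamma$-code of size $2$.

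The heart of each construction is the lower bound $\gamma_{\stackrel{}{11}}(G)\geq k$. The main tool is item~5 of Proposition~\ref{technical}: any vertex outside a prospective perfect dominating set $S$ that has two neighbors in $S$ must itself be added to $S$, and this forced inclusion typically cascades through the graph. The leaves attached to each internal vertex in the Table~\ref{tablegamma2} constructions serve precisely to trigger and amplify this cascade, forcing every non-leaf vertex to lie in any $\gamma_{\stackrel{}{11}}$-code.

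The main obstacle is making this propagation argument rigorous across the several sub-families indexed by the size range of $k$ relative to $n$: each row of Table~\ref{tablegamma2} requires a slightly different but analogous casework, distinguishing for instance whether $S$ meets a given leaf or its support vertex, and then tracking how the ``two neighbors forces inclusion'' rule propagates. The $n=6$ enumeration is conceptually trivial but the most tedious routine check, since one must verify $\gamma_{\stackrel{}{11}}\le 3$ for every graph in the list rather than for a single parametrized family.
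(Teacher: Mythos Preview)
Your proposal is correct and follows essentially the same architecture as the paper: the $n=5$ case via $P_5,C_5$, the constructions for $k\in\{2,3\}$ via Figures~\ref{22n}--\ref{23n}, and the constructions for $n\ge 7$, $k\ge 4$ via Table~\ref{tablegamma2}, with the lower bounds driven by the cascade argument from Proposition~\ref{technical}.

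The one substantive difference is your handling of the $n=6$ non-existence cases. You propose a direct enumeration of all connected graphs on six vertices with $\Delta=3$ and $\gamma=2$, checking $\gamma_{\stackrel{}{11}}\le 3$ for each. The paper instead invokes the general bound of Theorem~\ref{caseDelta=3}: every graph with $\Delta(G)=3$ other than the bull satisfies $\gamma_{\stackrel{}{11}}(G)\le n-3$, which for $n=6$ immediately gives $\gamma_{\stackrel{}{11}}\le 3$ and rules out $k\in\{4,5,6\}$. Your route is self-contained and requires no forward reference, but it is the ``most tedious routine check'' you flag; the paper's route replaces the enumeration by a single appeal to a structural lemma that is needed anyway later in the paper, so it is shorter here at the cost of depending on that subsequent result.
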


\begin{proof}
The unique graphs with 5 vertices that satisfy $\Delta(G)=5-3=2$ are the path $P_5$ and the cycle $C_5$. In Figure~\ref{c5}, it is straightforward to check that $\gamma(P_5)=\gamma_ {\stackrel{}{11}}(P_5)=2$ (squared black vertices) and $\gamma(C_5)=2$ (squared vertices), $\gamma_ {\stackrel{}{11}}(C_5)=3$ (black vertices).

Graphs in Figure~\ref{22n} satisfy $n\geq 6$, $\Delta(G)=n-3$, $\gamma =\gamma_ {\stackrel{}{11}}=2$, and squared black vertices are both a $\gamma$-code and a $\gamma_ {\stackrel{}{11}}$-code. On the other hand, graphs in Figure~\ref{23n} satisfy $n\geq 6$, $\Delta(G)=n-3$, $\gamma =2$ and $\gamma_ {\stackrel{}{11}}=3$, where the pair of squared vertices is a $\gamma$-code and the set of black vertices is a $\gamma_ {\stackrel{}{11}}$-code. Note that there is no graph $G$ satisfying $n=6$, $\Delta(G)=6-3=3$ and $\gamma_ {\stackrel{}{11}}\geq 4$ (see Theorem~\ref{caseDelta=3} in the next subsection).

Finally, in Table~\ref{tablegamma2}, some examples of graphs with $n\geq 7$, $\Delta(G)=n-3$, $\gamma=2$ and $\gamma_ {\stackrel{}{11}}\geq 4$ are shown. It is easy to verify that the sets given in this table are $\gamma$-codes or $\gamma_ {\stackrel{}{11}}$-codes using the Proposition~\ref{technical}.
\end{proof}

\begin{figure}[htbp]
\begin{centering}
    \subfigure[$|V(G)|=n\geq 6$, \newline $\gamma=\gamma_ {\stackrel{}{11}}=3$.\label{33n}]{\includegraphics[width=0.18\textwidth]{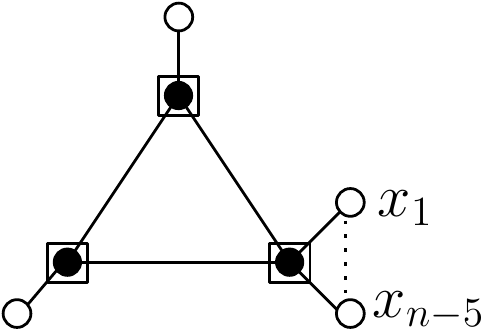}} \hspace{1.5cm}
    \subfigure[$|V(G)|=n\geq 7$, \newline $\gamma=3$ and $\gamma_ {\stackrel{}{11}}=4$.\label{34n}]{\includegraphics[width=0.18\textwidth]{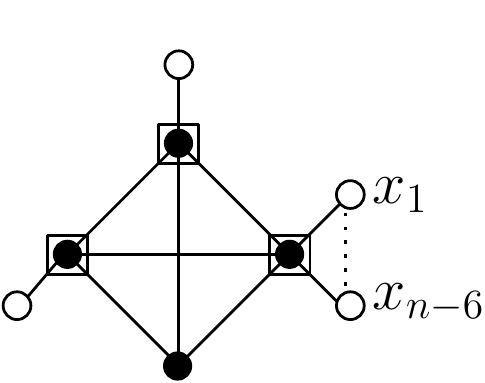}}
\caption{Small cases for Theorem~\ref{th:gamma3}.\label{fig:gamma3}}
\end{centering}
\end{figure}

\begin{theorem}\label{th:gamma3}
Let $(k,n)$ be a pair of integers such that $3\leq k\leq n$ and $n\geq 6$. Then, there exists a graph $G$ such that $|V(G)|=n$, $\Delta (G)=n-3$, $\gamma (G)=3$ and
$\gamma_ {\stackrel{}{11}}(G)=k$, if and only if $(k,n)\notin\{(4,6),(5,6),(6,6),(5,7),(6,7),(7,7),(8,8)\}$.
\end{theorem}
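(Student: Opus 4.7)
The plan is to establish both directions of the equivalence. For the construction direction I would split by $k$. The small cases $k=3$ and $k=4$ are realized by the two families in Figure~\ref{fig:gamma3}, so one only has to check that the indicated squared and black vertices are $\gamma$- and $\gamma_{11}$-codes respectively; the former is routine, and the latter follows from Proposition~\ref{technical}(5). For $k\geq 5$ and the remaining values of $n$, I would adapt the templates in Table~\ref{tablegamma2} (used for the $\gamma=2$ version of the previous theorem): starting from one of those graphs, attach a pendant edge or a short path to one of the two hub vertices in order to raise the domination number from $2$ to $3$, while preserving $\Delta=n-3$ and the size of a minimum perfect dominating set. In each family, the claimed $\gamma_{11}$-code consists of all non-leaf vertices, and its minimality follows from Proposition~\ref{technical}(5)–(6) exactly as in the $\gamma=2$ argument.

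For the impossibility direction I would argue case by case on small $n$. When $n=6$ we have $\Delta=3$, and Theorem~\ref{caseDelta=3} (forward-referenced from the next subsection) forces $\gamma_{11}\leq 3$, which excludes the pairs $(4,6),(5,6),(6,6)$. When $n=7$ we have $\Delta=4$ and $\gamma=3$; a short analysis of the possible structures of a $\gamma$-code $\{u_1,u_2,u_3\}$, combined with the observation that the three non-neighbors of a vertex of maximum degree can always be dominated perfectly, should yield $\gamma_{11}\leq 4$, ruling out $(5,7),(6,7),(7,7)$. The exceptional pair $(8,8)$ requires $\gamma_{11}(G)=n$, which by Proposition~\ref{technical}(4) forces $G$ to be leafless; together with $\Delta=5$ and $\gamma=3$ this leaves only a finite list of candidate graphs on eight vertices to inspect.

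The main obstacle is the impossibility argument for $(8,8)$. Unlike the $n\in\{6,7\}$ cases, it is not subsumed by the generic $\Delta=3$ bound or by a short degree-counting argument: it requires enumerating $8$-vertex graphs with $\Delta=5$, $\delta\geq 2$ and $\gamma=3$, and exhibiting in each a proper subset that is a perfect dominating set. The construction side, by contrast, is largely mechanical once the templates from Table~\ref{tablegamma2} have been correctly amended to force $\gamma=3$, the only non-obvious verification being that the induced $\gamma_{11}$-code is indeed of minimum cardinality, which reduces to the same clique-and-neighborhood arguments as in the preceding proof.
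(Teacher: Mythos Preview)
Your overall shape---explicit constructions for each realizable $(k,n)$ and case-by-case impossibility for the seven exceptional pairs---matches the paper.  Two points deserve comment.

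\medskip
\textbf{Constructions for $k\ge 5$.}  You propose to recycle the $\gamma=2$ templates of Table~\ref{tablegamma2} and push the domination number up to~$3$ by hanging a pendant (or a short path) on a hub vertex.  The paper does not do this: it gives a fresh family of constructions in a separate Table~\ref{tablegamma3}, tailored from the start to $\gamma=3$.  Your idea is not unreasonable, but it is more delicate than you suggest.  Attaching a pendant to a hub $v$ of degree $n'-3$ raises $\deg(v)$ to $n'-2=(n'+1)-3$, so $\Delta$ is preserved; however, the hubs in Table~\ref{tablegamma2} are precisely the members of the $\gamma$-code, so a new leaf adjacent to one of them is still dominated and $\gamma$ need \emph{not} increase.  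Forcing $\gamma=3$ while simultaneously keeping $\gamma_{11}$ at the prescribed value requires more care than ``attach a pendant'', and you would in effect end up redesigning the graphs---which is exactly what Table~\ref{tablegamma3} does.

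\medskip
\textbf{Impossibility for $n=7$.}  Here there is a genuine gap.  You claim that ``a short analysis of the possible structures of a $\gamma$-code $\{u_1,u_2,u_3\}$, combined with the observation that the three non-neighbors of a vertex of maximum degree can always be dominated perfectly, should yield $\gamma_{11}\le 4$''.  This observation does not give what you want: if $v$ has degree~$4$ then $V\setminus N[v]$ has only two vertices, and the set $\{v\}\cup(V\setminus N[v])$ is in general \emph{not} a perfect dominating set, since vertices of $N(v)$ may well have neighbours in $V\setminus N[v]$.  There is no evident uniform structural argument here, and the paper does not attempt one: it restricts (via Proposition~\ref{technical}(4)) to the $16$ graphs with $n=7$, $\Delta=4$, $\gamma=3$ and at most two leaves, and checks them by computer.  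Your treatment of $(8,8)$ is in the same spirit as the paper's (enumerate the $46$ leafless graphs with $n=8$, $\Delta=5$, $\gamma=3$), but for $n=7$ you should either produce an actual argument or, like the paper, fall back on exhaustive inspection.
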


\begin{proof}
Graphs in Figure~\ref{33n} satisfy $n\geq 6$, $\Delta(G)=n-3$, $\gamma =\gamma_ {\stackrel{}{11}}=3$, and squared black vertices are both a $\gamma$-code and a $\gamma_ {\stackrel{}{11}}$-code. In Figure~\ref{34n}, we have an example of graphs with $n\geq 7$, $\Delta(G)=n-3$, $\gamma =3$ and $\gamma_ {\stackrel{}{11}}=4$. Note that there is no graph $G$ satisfying $n=6$, $\Delta(G)=6-3=3$ and $\gamma_ {\stackrel{}{11}}\geq 4$ as we will see in the next subsection.
There exist 16 non-isomorphic graphs with 7 vertices, maximum degree 4, domination number 3 and at most 2 vertices of degree 1 (see~\cite{sage}), and 46 non-isomorphic graphs with 8 vertices, maximum degree 5, domination number 3 and with no vertices of degree 1 (see~\cite{sage}). By inspection, we have checked that there is no case belonging to $\{(5,7),(6,7),(7,7),(8,8)\}$.

Finally, we show examples of graphs with $n\geq 7$, $\Delta(G)=n-3$, $\gamma=3$ and $\gamma_ {\stackrel{}{11}}=4$ in Figure~\ref{fig:gamma3} and with $\gamma_ {\stackrel{}{11}}=k\geq 5$ in Table~\ref{tablegamma3}.
\end{proof}

\subsection{$\Delta(G)=3$}\label{delta3}

Note that if $\Delta(G)\leq 2$, then the graph $G$ is claw-free which will be studied in Subsection~\ref{clawfreeg}. Hence, here we focus our attention on $\Delta(G)=3$.

In~\cite{chhahemc13} it is shown that $\Delta(G)\leq 4$ implies $\gamma_ {\stackrel{}{12}}(G)\leq n-1$. We use similar techniques in the case $\Delta(G)=3$ to prove that $\gamma_ {\stackrel{}{11}}(G)\leq n-3$. Observe that this case does not appear in Theorem~\ref{condicion}, but the results of this subsection are useful for some proofs of previous subsections.

\begin{lemma}\label{conditions}
Let $G$ be a graph of order $n$ and $\Delta (G)=3$ such that at least one of the following conditions holds:
\begin{enumerate}
\item \label{c1} If $G$ contains an induced cycle $C$ such that all of its vertices have degree 3, then $\gamma_ {\stackrel{}{11}}\leq n-|V(C)|\leq n-3$.
\item \label{c2} If there exist two vertices $u,v \in V(G)$ with $\deg(u)=\deg(v)=2$, $d(u,v)\geq 2$ and there is an induced path $P$ joining them such that all of its vertices, other than $u$ and $v$, have degree 3, then $\gamma_ {\stackrel{}{11}}\leq n-|V(P)|\leq n-3$.
\end{enumerate}
\end{lemma}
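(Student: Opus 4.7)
The plan is to exhibit, in each case, an explicit perfect dominating set whose size matches the claimed bound. In both cases I would take $S$ to be the complement of the cycle or path: set $S=V(G)\setminus V(C)$ for case~\ref{c1} and $S=V(G)\setminus V(P)$ for case~\ref{c2}. It will suffice to verify that every vertex removed from $V(G)$ has exactly one neighbor in $S$, since this automatically makes $S$ a dominating set in which each outside vertex has a unique neighbor in $S$, i.e.\ a $\gamma_{\stackrel{}{11}}$-set of size $n-|V(C)|$ or $n-|V(P)|$ respectively.

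For case~\ref{c1}, I would pick any $w\in V(C)$ and observe that, since $C$ is an induced cycle, $w$ has exactly two neighbors on $C$; the hypothesis $\deg(w)=3$ then leaves exactly one neighbor of $w$ in $S$. Combined with $|V(C)|\ge 3$, this yields $\gamma_{\stackrel{}{11}}(G)\le n-|V(C)|\le n-3$. I would also briefly note that $S$ is nonempty: otherwise $G$ would be a $3$-regular graph on the vertex set of $C$, which would force chords in $C$ and contradict its inducedness.

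For case~\ref{c2}, the analysis is essentially the same. Each interior vertex of the induced path $P$ has degree $3$ and, by inducedness, exactly two path-neighbors, leaving exactly one neighbor in $S$; the endpoints $u,v$ have degree $2$ and, because $d(u,v)\ge 2$ prevents $uv\in E$ while $P$ is induced, each of $u,v$ has exactly one neighbor on $P$ (its path-successor or predecessor) and hence exactly one neighbor in $S$. The assumption $d(u,v)\ge 2$ also guarantees $|V(P)|\ge 3$, so $\gamma_{\stackrel{}{11}}(G)\le n-|V(P)|\le n-3$.

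The argument is a direct degree count, so there is no substantial obstacle. The only point requiring care is verifying, in each case, that the single ``off-structure'' neighbor of every path or cycle vertex actually lies \emph{outside} $V(C)$ or $V(P)$ (so that it really belongs to $S$), and that $S$ itself is nonempty; both are delivered by combining the induced hypothesis with the prescribed degree constraints.
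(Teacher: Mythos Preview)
Your proposal is correct and follows exactly the paper's approach: the paper's proof consists of the single observation that $V(G)\setminus V(C)$ (respectively $V(G)\setminus V(P)$) is a $\gamma_{11}$-set, and you supply the degree-count details that justify this.
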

\begin{proof}
If $G$ satisfies condition 1, the set $V(G)\setminus V(C)$ is a $\gamma_ {\stackrel{}{11}}$-set, and if $G$ satisfies condition 2, the set $V(G)\setminus V(P)$ is a $\gamma_ {\stackrel{}{11}}$-set.
\end{proof}

\begin{theorem}\label{caseDelta=3}
Let $G$ be a graph of order $n$ and $\Delta (G)=3$, other than the bull graph. Then $\gamma_{11}(G)\leq n-3$. Moreover the bull graph $B$ satisfies $\gamma_{11}(B)=3=n-2$.
\end{theorem}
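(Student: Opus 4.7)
My plan is to combine Lemma~\ref{conditions} with explicit constructions of perfect dominating sets. Write $V_i := \{v : \deg(v) = i\}$ and $H := G[V_3]$. The common goal is to find $T \subseteq V(G)$ with $|T|=3$ such that each $t \in T$ has exactly one neighbor in $V(G)\setminus T$, so that $V(G) \setminus T$ is a perfect dominating set of size $n-3$. Two quick reductions: if $|V_1| \geq 3$, taking $T$ to be any three leaves works (the unique neighbor of each leaf is a non-leaf, hence outside $T$); and if $H$ contains a cycle, an induced cycle of $H$ is induced in $G$ and lies entirely in $V_3$, so Lemma~\ref{conditions}(1) applies. From here I may assume $|V_1| \leq 2$ and $H$ is a forest.

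Next, if there exist non-adjacent $u,v \in V_2$, I pick such a pair minimizing $d(u,v) \geq 2$ and take a shortest $u$-$v$ path $P$. It is induced, and its interior lies in $V_3$: no interior vertex can be in $V_1$ since interior vertices have degree at least $2$, and no interior vertex can be in $V_2$ since such a vertex would form a strictly closer non-adjacent pair with $u$ or $v$ when $d(u,v) \geq 3$. Hence Lemma~\ref{conditions}(2) applies. The one residual subcase is $d(u,v)=2$ with the unique middle vertex in $V_2$; this is handled by searching for a common $V_3$-neighbor of $u$ and $v$ (providing a different length-$2$ induced path with $V_3$-interior) or by deferring the configuration to the next step. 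After this step I may assume $V_2$ is empty, a singleton, or an edge (a triangle on $V_2$ would be an isolated component of maximum degree $2$).

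The residual configurations I handle by a degree-sum count inside $H$: each $w \in V_3$ has $\deg_H(w) = 3 - |N(w) \cap (V_1 \cup V_2)|$, and summing together with the forest bound $\sum \deg_H(w) \le 2(|V_3|-1)$ and $\sum |N(w)\cap(V_1\cup V_2)| \le |V_1|+2|V_2|$ yields $|V_3| \le |V_1| + 2|V_2| - 2$. With $|V_1| \le 2$ this rules out $V_2 = \emptyset$; it forces $|V_3| \le 2$ when $|V_2| = 1$, and the sharp equality together with a case split on where the (at most two) leaves attach to the two vertices of $V_3$ pins down the bull as the unique feasible graph. For $V_2 = \{u,v\}$ an edge, the analogous count forces $|V_3| \le |V_1| \le 2$, rules out the configurations in which a leaf is the private neighbor of $u$ or of $v$, and leaves only cases in which some leaf $\ell$ attached to $V_3$ exists; then $T := \{u,v,\ell\}$ is checked to satisfy the required condition. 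Finally, for the bull $B$ itself, $\{a,b,c\}$ (the three non-leaf vertices) is a perfect dominating set, and no $2$-element set perfectly dominates the central degree-$2$ vertex, giving $\gamma_{11}(B) = 3 = n-2$.

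The main obstacle is the case analysis at the extremal end of the degree-sum inequality: the bound is tight exactly at the bull, so the $|V_2|=1$ subcase requires carefully tracking where the leaves attach to the two vertices of $V_3$ to isolate the bull as the unique exception, and the edge-$V_2$ case requires distinguishing the sub-configurations according to whether the private neighbors of $u$ and $v$ coincide, are distinct vertices of $V_3$, or are leaves.
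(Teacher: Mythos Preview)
Your overall strategy is genuinely different from the paper's: the paper fixes an induced cycle $C$ of $G$ and does a case analysis on how many degree-$2$ vertices lie on $C$ (and whether they are adjacent), repeatedly invoking Lemma~\ref{conditions}. You instead work globally with $H=G[V_3]$, reduce to the case where $H$ is a forest and $|V_1|\le 2$, and then use the degree-sum inequality $|V_3|\le |V_1|+2|V_2|-2$ to force a tiny graph. That counting idea is nice and, in the cases where $V_2$ really is empty, a singleton, or an edge, your analysis correctly isolates the bull.

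However, Step~3 has a genuine gap. When the minimum-distance non-adjacent pair $u,v\in V_2$ satisfies $d(u,v)=2$ and every common neighbour $w$ lies in $V_2$, you write that this is ``handled by \ldots\ deferring the configuration to the next step.'' But the next step explicitly assumes $|V_2|\le 2$, whereas here $u,v,w$ are three distinct vertices of $V_2$, so the degree-sum bound you derive there is not available. Concretely, take $V=\{u,v,w,u',v',x,y\}$ with edges $uw,\,vw,\,uu',\,vv',\,u'v',\,u'x,\,v'y$. Then $\Delta=3$, $|V_1|=|\{x,y\}|=2$, $H=G[\{u',v'\}]$ is a single edge (a forest), $V_2=\{u,v,w\}$, the unique non-adjacent pair in $V_2$ is $\{u,v\}$ at distance $2$, and the only common neighbour is $w\in V_2$. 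None of your four steps applies, yet the graph is not the bull. (In fact $V\setminus\{u,u',v',v\}$ is a perfect dominating set: the induced path $u\,u'\,v'\,v$ has degree-$2$ endpoints and degree-$3$ interior, which is exactly what the paper's cycle-based argument finds in its Case~1.) The fix is not to take a shortest $u$--$v$ path but to argue that \emph{some} induced path between two non-adjacent $V_2$-vertices has its interior contained in $V_3$; your minimality trick does not establish this, and an extra argument (or a switch to the paper's cycle-walk) is needed.
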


\begin{proof}
Note that vertices belonging to the triangle of the bull graph $B$ are a $\gamma_ {\stackrel{}{11}}$-code, and hence  $\gamma_ {\stackrel{}{11}}(B)=3$ (see Figure~\ref{bull}).

Suppose next that $G$ is a tree. It is clear that $\Delta(G)=3$ implies that there are at least three leaves, so by Proposition~\ref{technical} $\gamma_ {\stackrel{}{11}}\leq n-3$. So we may assume that $G$ (strictly) contains at least one induced cycle $C$. We consider different situations regarding this cycle.

\begin{case}
The cycle $C$ contains two vertices $a$ and $b$ with $\deg(a)=\deg(b)=2$, $d(a,b)\geq 2$.
\end{case}
It is clear that $C$ contains at least a vertex $w$ of degree 3, so walking from $w$ along the cycle towards both directions, we find two vertices $u$ and $v$ in $C$, such that $\deg(u)=\deg(v)=2$, $d(u,v)\geq 2$ and all vertices, other than $u$ and $v$, in the induced path on the cycle between them, have degree 3. So $G$ satisfies condition 2 of Lemma~\ref{conditions}.

\noindent
\begin{table}[htbp]
\begin{center}
\begin{tabular}{|c|c|c|c|c|c|}
\hline
\backslashbox{$\gamma_ {\stackrel{}{11}}$}{n} & 8 & 9 & 10 & $\geq$11 \\
\hline
5 &  \multicolumn{4}{l|}{ \includegraphics[width=3cm]{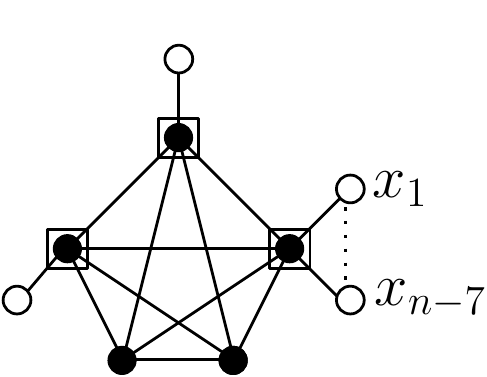} \ \ \ $n\geq 8$}\\
\hline
6 & \includegraphics[width=2cm]{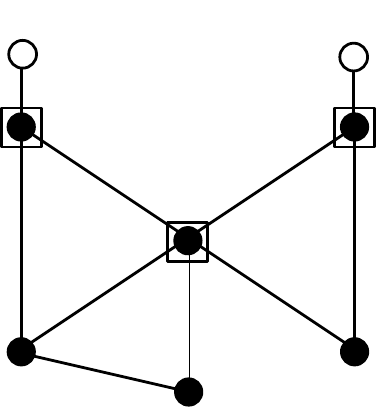} & \multicolumn{3}{l|}{ \includegraphics[width=3cm]{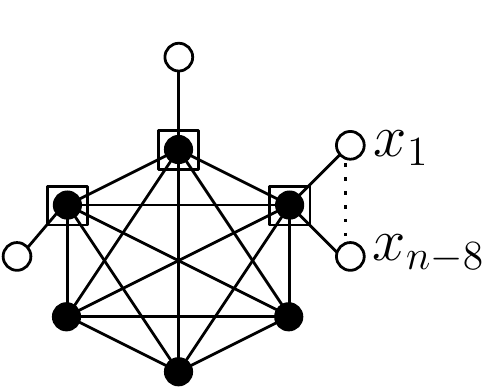} \ \ \ $n\geq 9$}\\
\hline
7 & \includegraphics[width=2.8cm]{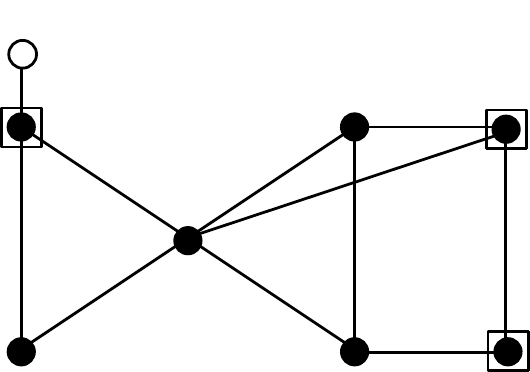}  & \includegraphics[width=2cm]{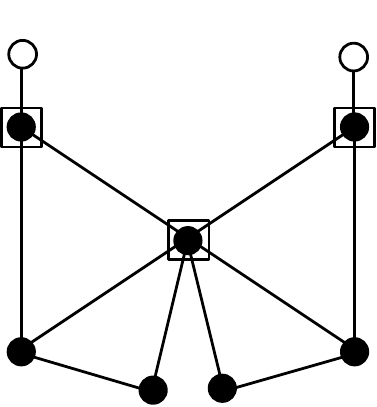} &\multicolumn{2}{l|}{ \includegraphics[width=3cm]{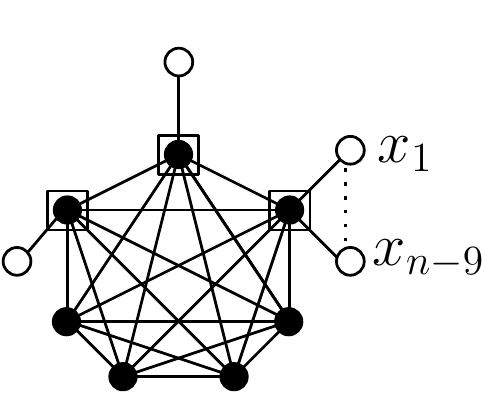} \ \ \ $n\geq 10$}\\
\hline
8 & \pbox{20cm}{\pbox{20cm}{does not exist \\ \\  \\  }} & \includegraphics[width=3cm]{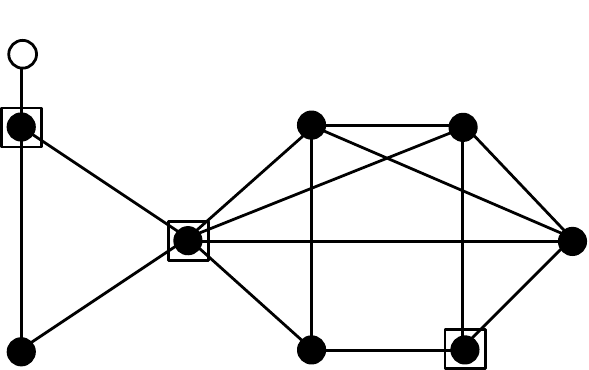} &\includegraphics[width=2cm]{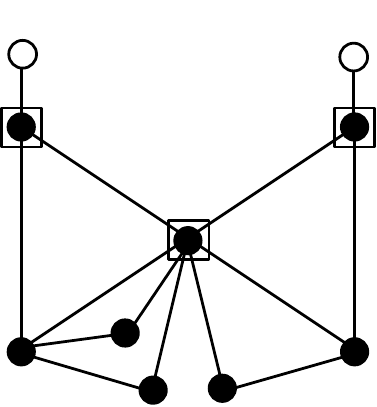}  &\includegraphics[width=3cm]{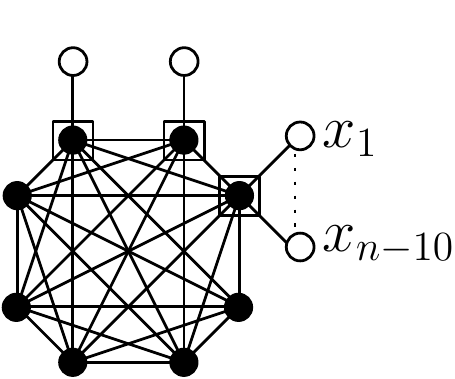} $n\geq 11$\\
\hline
$\geq$9 & \pbox{20cm}{no sense\\  \\  } & \multicolumn{3}{l|}{ \includegraphics[width=5cm]{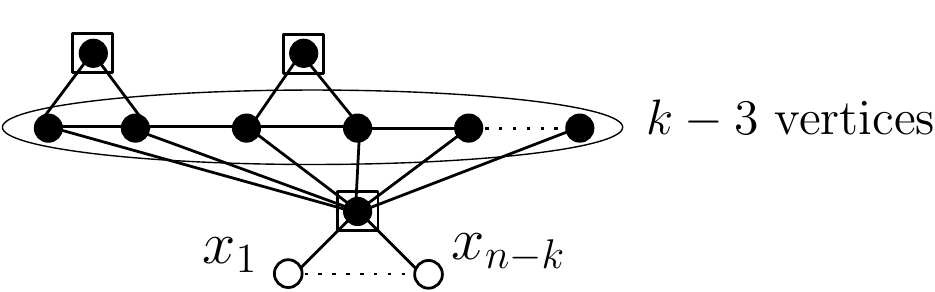} \ \ \ $9\leq \gamma_ {\stackrel{}{11}}=k\leq n$} \\
\hline
\end{tabular}
\caption{Examples of graphs with $n\geq 7$, $\Delta(G)=n-3$, $\gamma=3$ and $\gamma_ {\stackrel{}{11}}\geq 4$. The triplet of squared vertices in each graph is a $\gamma$-code and black vertices form a $\gamma_ {\stackrel{}{11}}$-code. Observe that every $\gamma_ {\stackrel{}{11}}$-code is formed by all non-leaf vertices.}
\label{tablegamma3}
\end{center}
\end{table}

\begin{case}
The cycle $C$ contains exactly two vertices $a$ and $b$ with $\deg(a)=\deg(b)=2$ and they satisfy $d(a,b)=1$.
\end{case}
There are three possible situations:
\begin{enumerate}
\item There exists a vertex $w\in V(G)$ with $\deg(w)=1$. Then it is clear that $w$ is neither a neighbor of $a$ nor of $b$, and so $V(G)\setminus \{a,b,w\}$ is a $\gamma_ {\stackrel{}{11}}$-set.

\item There exists a vertex $w\in V(G)\setminus\{a,b\}$ with $\deg(w)=2$. In this case $w\notin V(C)$ and $d(a,w)\geq 2, d(b,w)\geq 2$. Having in mind that all vertices in $C$, different from $a$ and $b$ have degree 3, going along an induced path from $a$ (or $b$) to $w$, it is possible to find vertices $u,v\in V(G)$ such that $\deg(u)=\deg(v)=2$, $d(u,v)\geq 2$ and all vertices, other than $u$ and $v$, in the induced path between them, have degree 3. So $G$ satisfies condition 2 of Lemma~\ref{conditions}.

\item Any vertex $w\in V(G)\setminus\{a,b\}$ satisfies $\deg(w)=3$. Then there must be another cycle $D$ in $G$, different from $C$, and it is clear that $a,b\notin V(D)$. So $G$ satisfies condition 1 of Lemma~\ref{conditions}.

\end{enumerate}

\begin{case}
The cycle $C$ contains exactly one vertex $a$ with $\deg(a)=2$.
\end{case}
Firstly, if there exists $w\in V(G)\setminus\{a\}$ with $\deg(w)=2$, going along an induced path from $a$ to $w$, we can find vertices $u,v\in V(G)$ that ensure $G$ satisfies condition 2 of Lemma~\ref{conditions}. So suppose now that any vertex $w\in V(G)\setminus\{a\}$ has degree either 1 or 3. If there is another cycle $D$ in $G$, different from $C$, then $G$ satisfies condition 1 of Lemma~\ref{conditions}. Therefore assume that $G$ is an unicyclic graph, with cycle $C$.

\begin{enumerate}
\item If $C$ has at least 4 vertices, from the fact that it has just one vertex of degree 2 it follows that there are three or more vertices of degree 3. Thus $G$ has at least three leaves and therefore $\gamma_{11}(G)\leq n-3$.
\item If $C=C_3$, let $\{a,x,y\}$ be the vertices of $C_3$, where $a$ is the vertex of degree $2$ in $G$. Using that $G$ is not the bull graph (see Figure~\ref{nobull}), it is clear that vertex $z$, the neighbor of $x$ not in $C_3$, is not a leaf so it has degree 3 (remember that we have assumed that no vertex other than $a$ has degree 2). Then there are at least three leaves in $G$  and $\gamma_{11}(G)\leq n-3$.
\end{enumerate}

\begin{case}
All vertices in $C$ have degree 3.
\end{case}
Here $G$ satisfies condition 1 of Lemma~\ref{conditions}.
\end{proof}

\begin{figure}[htbp]
  \begin{centering}
    \subfigure[Bull graph.\label{bull}]{\includegraphics[width=0.12\textwidth]{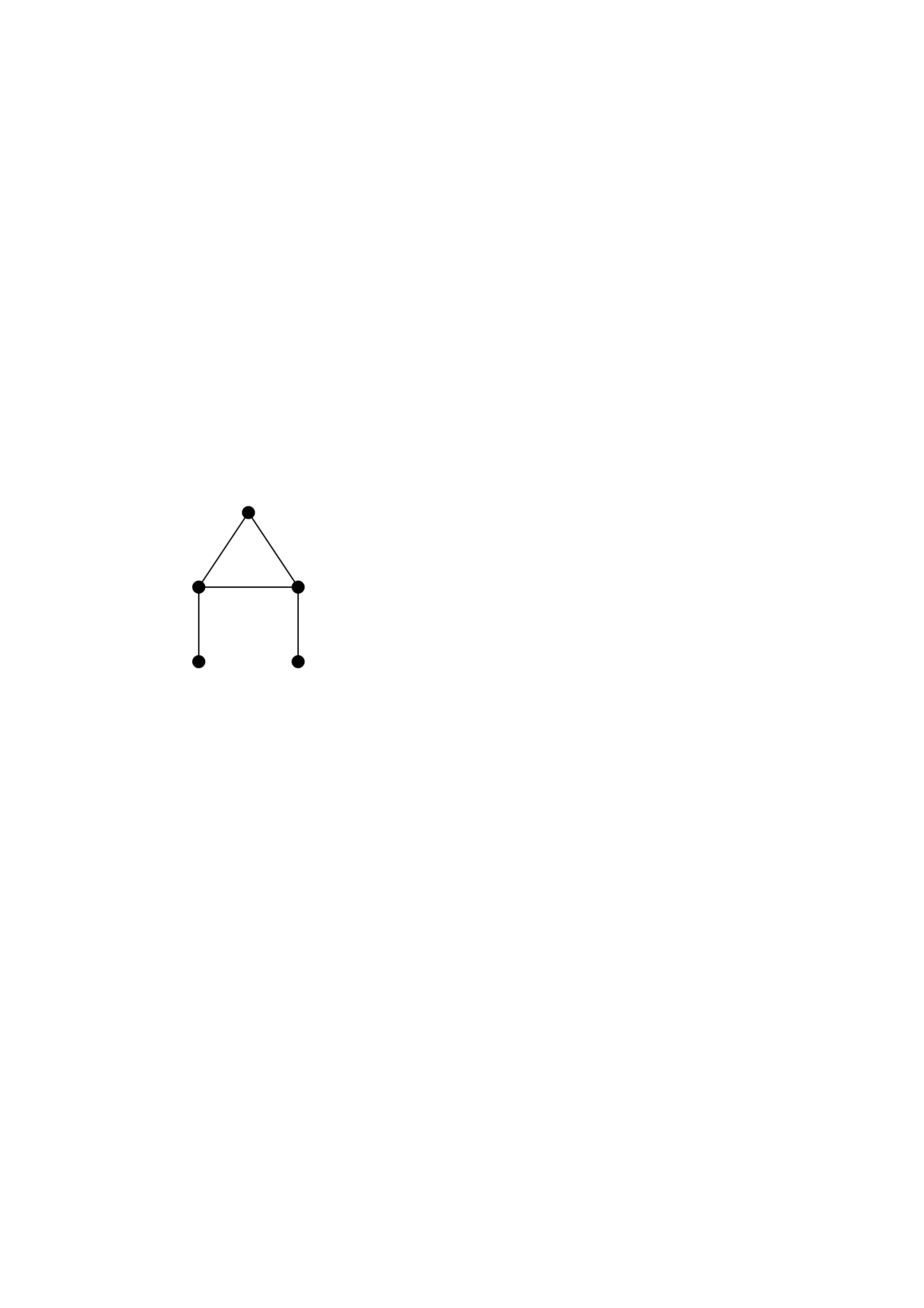}} \hspace{1.5cm}
    \subfigure[Not a bull.\label{nobull}]{\includegraphics[width=0.20\textwidth]{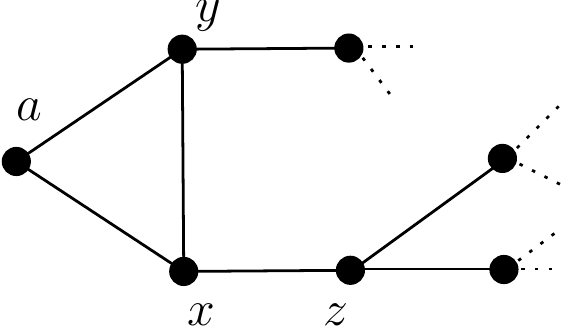}}
\caption{Graphs with $\Delta=3$, containing a cycle $C_3$ and with a unique vertex of degree $2$.}
  \end{centering}
\end{figure}

Note that the upper bound in Theorem~\ref{caseDelta=3} is tight, for instance $\gamma_ {\stackrel{}{11}}(K_{1,3})=1=n-3$. However this bound can be improved in some special cases.

\begin{proposition}
Let $G$ be a cubic graph other than the complete graph with four vertices $K_4$. Then $\gamma_ {\stackrel{}{11}}(G)\leq n-4$.
\end{proposition}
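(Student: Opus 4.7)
My plan is to reduce the bound $\gamma_{11}(G)\le n-4$ to exhibiting a single induced cycle of length at least four. Indeed, by Lemma~\ref{conditions}(\ref{c1}), whenever $G$ contains an induced cycle $C$ all of whose vertices have degree $3$, the complement $V(G)\setminus V(C)$ is a $\gamma_{11}$-set, so $\gamma_{11}(G)\le n-|V(C)|$. In a cubic graph every vertex has degree $3$, so the degree hypothesis is automatic and the entire task reduces to producing such an induced cycle of length at least $4$.

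The key step---and the main obstacle---is to establish that any connected cubic graph distinct from $K_4$ does admit an induced cycle of length at least $4$. I would argue the contrapositive: suppose every induced cycle of $G$ is a triangle, so that $G$ is chordal, and deduce $G=K_4$. Invoking Dirac's classical result that every chordal graph contains a simplicial vertex, pick such a vertex $v$; its neighborhood $N(v)$ induces a clique, which (since $\deg(v)=3$) must be a triangle. Thus $\{v\}\cup N(v)$ spans an induced $K_4$ in $G$. Each of these four vertices already has three neighbors inside this set, so cubicity forbids any edge leaving it, and the connectedness of $G$ forces $G=K_4$, contradicting our hypothesis.

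Putting the two pieces together, for $G\ne K_4$ we obtain an induced cycle $C$ with $|V(C)|\ge 4$, and Lemma~\ref{conditions}(\ref{c1}) yields $\gamma_{11}(G)\le n-|V(C)|\le n-4$. The only delicate point is the chordal-implies-$K_4$ reduction in the middle step; once the simplicial-vertex lemma is invoked the remaining verification is routine degree counting within $K_4$, and everything else is a direct appeal to the earlier lemma.
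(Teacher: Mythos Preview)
Your proof is correct and follows essentially the same route as the paper's: reduce to finding an induced cycle of length at least four via Lemma~\ref{conditions}(\ref{c1}), then show that a chordal cubic graph must be $K_4$. The only cosmetic difference is that the paper phrases the chordal step through a perfect elimination ordering (taking $v_1$ as the simplicial vertex) rather than citing Dirac's simplicial-vertex theorem directly; the two formulations are equivalent and the degree-count finishing argument is identical.
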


\begin{proof}
Let $G$ be a cubic graphs other than $K_4$. If $G$ has an induced cycle with at least $4$ vertices, then using condition 1 of Lemma~\ref{conditions} we obtain $\gamma_ {\stackrel{}{11}}(G)\leq n-4$. Suppose on the contrary that $G$ contains no induced cycle of length greater of equal than $4$ so $G$ is a chordal graph. It is well known that chordal graphs have a perfect elimination ordering, so we order the vertex set $V(G)=\{v_1,\dots ,v_n\}$ in that way. Then for any vertex, its neighbors occurring after it in the order form a clique. Applying this property to $v_1$ we obtain that its three neighbors form a triangle, so $G=K_4$.
\end{proof}

\begin{proposition}
Let $T$ be a tree with order $n\ge 7$ and $\Delta (G)=3$. Then $\gamma_ {\stackrel{}{11}}(T)\leq n-4$.
\end{proposition}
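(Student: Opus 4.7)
The plan is to split into cases according to $\ell(T)$, the number of leaves of $T$, and to lean on Proposition~\ref{technical}(4), which gives $\gamma_{11}(T) \le n - \ell(T)$. Writing $n_i$ for the number of vertices of $T$ of degree $i$, the handshake identity $\sum_v \deg(v) = 2(n-1)$ combined with $n_1+n_2+n_3 = n$ and $n_3 \ge 1$ (since $\Delta(T)=3$) gives $n_1 = n_3 + 2$, so $\ell(T) \ge 3$. Equality forces $n_3 = 1$ and $n_2 = n-4$; in that case $T$ is a spider with a unique center $v_0$ of degree $3$ from which three internally disjoint paths (``legs'') of lengths $a \le b \le c$ emanate, where $a+b+c = n-1 \ge 6$.

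If $\ell(T) \ge 4$ the conclusion is immediate from Proposition~\ref{technical}(4). In the spider case the plan is to exhibit an explicit $\gamma_{11}$-set of cardinality exactly $n-4$. Label the vertices of the $i$-th leg as $v_0, x_i^1, \ldots, x_i^{L_i} = \ell_i$, where $L_1=a$, $L_2=b$ and $L_3=c$.

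When $c \ge 3$, set
$$S := V(T) \setminus \{\ell_1, \ell_2, x_3^{c-1}, x_3^{c-2}\},$$
which has cardinality $n-4$. A direct inspection shows each deleted vertex has exactly one neighbor in $S$: the parent of $\ell_i$ for $i=1,2$ (namely $v_0$ if $L_i=1$ and $x_i^{L_i-1}$ otherwise) lies in $S$; the vertex $x_3^{c-1}$ is adjacent to $\ell_3 \in S$ and to $x_3^{c-2} \notin S$; and $x_3^{c-2}$ is adjacent to $x_3^{c-1} \notin S$ and to $x_3^{c-3}$, which equals $v_0$ when $c=3$ and lies on the third leg when $c \ge 4$, and in either case belongs to $S$. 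The remaining case $c=2$, together with $a \le b \le c$ and $a+b+c \ge 6$, forces $(a,b,c)=(2,2,2)$ and $n=7$; here I would take the set $S := \{x_1^1, \ell_2, \ell_3\}$ of cardinality $3 = n-4$ and check directly that each of the four vertices $v_0, \ell_1, x_2^1, x_3^1$ outside $S$ has exactly one neighbor in $S$.

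The main obstacle is organizational rather than analytical: one must first recognize that the only trees with $\Delta=3$ that fail the leaf bound $\ell(T) \ge 4$ are spiders, and then observe that the natural scheme ``delete two leaves plus the two vertices closest to the third leaf on the longest leg'' breaks down exactly when the longest leg has length $2$, pinning down the exceptional configuration $(a,b,c)=(2,2,2)$ that must be handled by a separate ad-hoc construction.
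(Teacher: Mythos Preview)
Your proof is correct and follows the same overall strategy as the paper: use the leaf bound from Proposition~\ref{technical}(4) to reduce to the spider case (a unique degree-$3$ vertex with three pendant paths), then exhibit an explicit $\gamma_{11}$-set of size $n-4$. The only differences are organizational. The paper splits the spider case into three subcases according to how many legs have length $1$ and constructs a different $\gamma_{11}$-set in each, whereas you use a single construction ``delete two leaves plus the two vertices next to the third leaf on the longest leg'' whenever $c\ge 3$, and treat the residual configuration $(a,b,c)=(2,2,2)$ separately. Both case splits and both families of explicit sets are valid; your version is slightly more uniform at the cost of one ad-hoc case.
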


\begin{proof}
If $T$ has at least two vertices of degree $3$, then it has at least four leaves and we are done. So suppose that there exists an unique vertex $u\in V(T)$ with $deg(u)=3$. We denote by $A,B,C$ the three sets of vertices of the connected components of $T\setminus\{ u\}$, with $|A|\leq |B|\leq |C|$. If $A=\{a\}$ and  $B=\{b\}$ then $C=\{c_1,c_2,c_3,\dots ,c_k\}$ with $k\geq 4$ and we define $S=\{ u,c_3,\dots ,c_k\}$ (see Figure~\ref{tree_71}). If $A=\{a\}$ and  $B=\{b_1,\dots b_r\}$ with $r\geq 2$ then $C=\{c_1,\dots c_k\}$ with $k\geq 3$ and we define $S=\{ a,b_2,\dots ,b_r, c_2,\dots ,c_{k-1}\}$ (see Figure~\ref{tree_72}). Finally if  $A=\{a_1,\dots a_s\}$ with $s\geq 2$ then $B=\{b_1,\dots b_r\}$ with $r\geq 2$ and  $C=\{c_1,\dots c_k\}$ with $k\geq 2$ and we define $S=\{ a_2,\dots, a_s,b_2,\dots ,b_r, c_1,\dots ,c_{k-1}\}$ (see Figure~\ref{tree_73}). In all cases $S$ is a $\gamma_ {\stackrel{}{11}}$-set  of $T$ with $|S|=n-4$.
\end{proof}

\begin{figure}[ht]
  \begin{centering}
    \subfigure[Case 1.\label{tree_71}]{\includegraphics[width=0.2\textwidth]{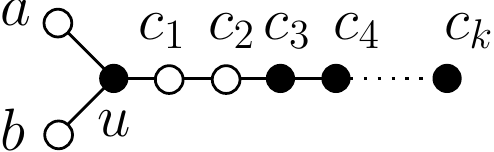}} \hspace{1.5cm}
    \subfigure[Case 2.\label{tree_72}]{\includegraphics[width=0.25\textwidth]{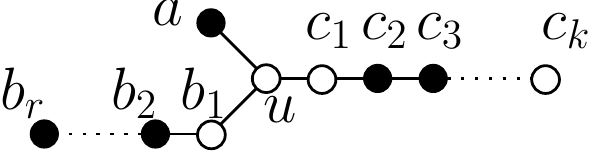}} \hspace{1.5cm}
    \subfigure[Case 3.\label{tree_73}]{\includegraphics[width=0.2\textwidth]{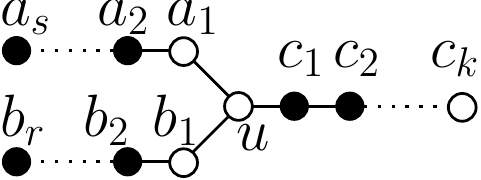}}
\caption{The set of black vertices is a $\gamma_ {\stackrel{}{11}}$-set.}
  \end{centering}
\end{figure}

\section{Cographs and claw-free graphs}\label{freegraphs}
Theorem~\ref{condicion} provides different conditions for a graph to have a short quasiperfect domination chain. Certain extremal values of the maximum degree guarantees that, as we have just studied in Section~\ref{extremal}. However in this section, we are interested in those cases in which the graph belongs to special classes, namely cographs and claw-free graphs.

\subsection{Cographs}\label{cographs}

Cographs are inductively defined as follows~\cite{corneil,bm}:
\begin{itemize}
\item Every single vertex graph is a cograph.
\item If $G_1$ and $G_2$ are two cographs, then their disjoint union is a cograph.
\item The join graph $G_1\vee G_2$ of two cographs is a cograph. Recall that the join graph $G_1\vee G_2$ is obtained from their disjoint union by adding all
edges between vertices of $G_1$ and $G_2$.
\end{itemize}

The next result gives us the values of $\gamma_ {\stackrel{}{11}} (G)$ where $G$ is the join of two graphs.

\begin{theorem}
Let $G=G_1\vee G_2$ be a graph of order $n$. Then,
\begin{enumerate}
\item $\gamma_ {\stackrel{}{11}} (G)=1$ if and only if $G_1$ or $G_2$ have a universal vertex.
\item $\gamma_ {\stackrel{}{11}} (G)=2$ if and only if both $G_1$ and $G_2$ have at least an isolated vertex.
\item $\gamma_ {\stackrel{}{11}} (G)=n$ in other case.
\end{enumerate}
\end{theorem}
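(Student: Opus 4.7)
The plan is to handle the three cases in order, reading them as mutually exclusive: Part~2 applies when Part~1 fails, and Part~3 when both fail. The single structural observation driving everything is that in $G=G_1\vee G_2$, every vertex of $V(G_i)$ is automatically adjacent to all of $V(G_{3-i})$, so any domination condition on $v\in V(G_i)$ really only constrains which neighbors of $v$ \emph{inside} $V(G_i)$ lie in $S$.

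Part~1 reduces directly to Proposition~\ref{technical}(\ref{three}): $\gamma_{11}(G)=1$ iff $\Delta(G)=n-1$, and by the above observation a vertex $v\in V(G_i)$ is universal in $G$ precisely when it is universal in its own $G_i$.

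For Part~2 I would prove both implications, assuming Part~1 fails so that $\gamma_{11}(G)\geq 2$. Forward: given isolated vertices $u_i\in V(G_i)$, the set $S=\{u_1,u_2\}$ perfectly dominates, since every $v\in V(G_1)\setminus\{u_1\}$ is adjacent to $u_2$ via the join but not to $u_1$ by isolation, giving $|N(v)\cap S|=1$, and symmetrically on the other side. Backward: let $S=\{x,y\}$ be a $\gamma_{11}$-code. If $x,y$ both lay in $V(G_1)$, then every $v\in V(G_2)$ would satisfy $|N(v)\cap S|=2$ via the join, and Proposition~\ref{technical}(5) would force $V(G_2)\subseteq S$, contradicting $S\cap V(G_2)=\emptyset$. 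So $x,y$ split across the two sides, and then for any $v\in V(G_1)\setminus\{x\}$ the join already supplies the neighbor $y$; perfection demands $xv\notin E(G_1)$ for all such $v$, i.e., $x$ is isolated in $G_1$, and symmetrically $y$ in $G_2$.

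For Part~3, assume Parts~1 and~2 both fail and suppose for contradiction there is a $\gamma_{11}$-code $S$ with $|S|<n$. Set $S_i=S\cap V(G_i)$. The key lever is again Proposition~\ref{technical}(5): whenever $|S_j|\geq 2$, every $v\in V(G_{3-j})\setminus S_{3-j}$ receives at least two neighbors from $S_j$ through the join, so $V(G_{3-j})\subseteq S$. A split on $(|S_1|,|S_2|)$ closes the argument: both sides $\geq 2$ gives $S=V(G)$, contradicting $|S|<n$; exactly one side $\geq 2$ combined with $|S_{3-j}|\leq 1$ pins $|V(G_{3-j})|\leq 1$, making $G_{3-j}$ a single vertex (necessarily universal in itself) and contradicting the failure of Part~1; both sides $\leq 1$ gives $|S|\in\{1,2\}$, where $|S|=1$ again forces a universal vertex and $|S|=2$ replays the backward argument of Part~2 to impose isolated vertices on both sides, contradicting the failure of Part~2. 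The main delicate point is the interplay at $|V(G_i)|=1$ between ``isolated'' and ``universal'', where the three cases could a priori overlap; careful bookkeeping ensures each sub-case refutes exactly the right hypothesis.
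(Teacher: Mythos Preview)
Your proof is correct and rests on the same mechanism as the paper: once $|S\cap V(G_i)|\geq 2$, the join edges force $V(G_{3-i})\subseteq S$. The only organizational difference is in Part~3. The paper's converse argument for Part~2 assumes (without loss of generality) that $G_1$ has no isolated vertex and exhibits two vertices of $S\cap V(G_1)$; Part~3 then simply continues from that point, so the bootstrap $V(G_2)\subseteq S$, $|V(G_2)|\geq 2$, $V(G_1)\subseteq S$ takes two lines. You instead restart in Part~3 with a full case split on $(|S_1|,|S_2|)$, re-deriving the small cases $|S|\leq 2$ rather than invoking the already-established equivalences. This is a little longer but entirely self-contained and handles the $|V(G_i)|=1$ edge case explicitly, which the paper leaves implicit.
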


\begin{proof}
The first claim is obvious since $\gamma_ {\stackrel{}{11}}(G)=1$ if and only if $\gamma (G)=1$, and a universal vertex of $G$ is a universal vertex either of $G_1$ or $G_2$. From now on, we assume that there is no universal vertex in $G$.

For the second claim, let $u_1\in V(G_1)$ and $u_2\in V(G_2)$ be isolated vertices in $G_1$ and $G_2$ respectively, then it is clear that $\{u_1,u_2\}$ is a $\gamma_ {\stackrel{}{11}}$-code of $G$. Conversely, suppose that $G_1$ (without loss of generality) has no isolated vertex. Let $S$ be a $\gamma_ {\stackrel{}{11}}$-code of $G$, then $S$ must contain at least one vertex $v_1\in V(G_1)$ and one vertex $v_2\in V(G_2)$. Let $x$ be a neighbor of $v_1$ in $G_1$, then it has at least two neighbors in $S$, so $x\in S$ and $\gamma_ {\stackrel{}{11}} (G)\geq 3$.

Finally if $G$ has no universal vertex and $G_1$ has no isolated vertices, we know that $\gamma_ {\stackrel{}{11}} (G)\geq 3$. Let $S$ be a $\gamma_ {\stackrel{}{11}}$-code of $G$, then there are at least two vertices of $S$ in $V(G_1)$ which implies $V(G_2)\subset S$. Note that $|V(G_2)|\geq 2$, because there is no universal vertex in $G$, so there are at least two vertices of $S$ in $V(G_2)$ and also $V(G_1)\subset S$, as desired.
\end{proof}

Note that a connected cograph is the join of two cographs. Thus, the above theorem applies also to those graphs.

\begin{corollary}
Let $G=G_1\vee G_2$ be a connected cograph without universal vertices. Then, $\gamma_ {\stackrel{}{11}} (G)=2$ if both $G_1$ and $G_2$ have at least an isolated vertex, and $\gamma_ {\stackrel{}{11}} (G)=n$ in any other case.
\end{corollary}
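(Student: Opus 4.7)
The plan is to observe that this corollary is a direct specialization of the preceding theorem to the class of cographs. The only ingredient we need beyond the theorem itself is the structural fact stated in the paragraph immediately preceding the corollary, namely that every connected cograph can be written as the join $G_1 \vee G_2$ of two (smaller) cographs. This characterization follows from the inductive definition of cographs listed earlier: a cograph is built from single vertices by disjoint unions and joins, and if the top-level operation in the construction were a disjoint union the resulting graph would be disconnected, so a connected cograph of order at least two must arise via a join.

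Given this decomposition, the corollary follows by applying the theorem to $G = G_1 \vee G_2$ and discarding its first alternative, since by hypothesis $G$ has no universal vertex. More explicitly, I would argue as follows. First, since $G$ has no universal vertex, neither $G_1$ nor $G_2$ has a universal vertex (a universal vertex of either factor would become a universal vertex of the join), so part~(1) of the theorem rules out $\gamma_{11}(G) = 1$. Second, if both $G_1$ and $G_2$ contain an isolated vertex, part~(2) of the theorem gives $\gamma_{11}(G) = 2$. Third, if at least one of $G_1,G_2$ has no isolated vertex, then we are in the complementary case of the theorem and part~(3) gives $\gamma_{11}(G) = n$.

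There is essentially no obstacle here, since the heavy lifting has already been done in the preceding theorem; the only subtlety worth spelling out is the justification that the hypothesis ``$G$ has no universal vertex'' indeed places us outside the first alternative of the theorem, i.e., neither factor has a universal vertex. This is the only place where one might be tempted to gloss over a detail, so I would state it explicitly and then invoke parts~(2) and~(3) of the theorem verbatim to finish the proof.
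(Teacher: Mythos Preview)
Your argument is correct and matches the paper's approach exactly: the paper simply notes that a connected cograph is the join of two cographs and states the corollary as an immediate consequence of the preceding theorem, without further proof. Your write-up in fact supplies more detail than the paper does, including the explicit check that the absence of a universal vertex in $G$ rules out case~(1) of the theorem.
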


\subsection{Claw-free graphs}\label{clawfreeg}

Claw-free graphs, also known as $K_{1,3}$-free graphs, is another graph family where $\gamma=\gamma_ {\stackrel{}{12}}$ according to Theorem~\ref{condicion}. The next result provides examples of claw-free graphs for a great variety of different values for $\gamma, \gamma_ {\stackrel{}{11}}$ and $n$.

\begin{theorem}\label{claw-free th} Let $h,k,n$ be integers such that $4\le n$, $2\le h\le k\le n$ satisfying $h+k\le n$ or $3\, h+k+1\le 2\, n$. Then, there exists a claw-free graph $G$ of order $n$ such that $\gamma (G)=h$ and $\gamma_ {\stackrel{}{11}} (G)=k$.
\end{theorem}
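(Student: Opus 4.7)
The strategy is constructive: for each admissible triple $(h,k,n)$ I would exhibit a claw-free graph $G$ of order $n$ with $\gamma(G)=h$ and $\gamma_{11}(G)=k$. The two conditions in the hypothesis correspond naturally to two different construction regimes, both built from small claw-free ``blocks'' joined by single bridge edges placed only at vertices of degree~$2$. This bridge placement is crucial: it keeps every vertex of degree~$3$ inside a triangle, so two of its three neighbours are adjacent and no claw can appear in $G$.

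The available blocks are the triangle, which is a $(1,1)$-block (contributing $1$ to both $\gamma$ and $\gamma_{11}$); the $5$-cycle $C_5$, which is a $(2,3)$-block (as already noted in Table~\ref{tab:ghs}); and the cocktail-party graph $K_{2,\dots,2}$ on $2m\ge 6$ vertices, which is claw-free and is a $(2,2m)$-block, producing arbitrarily large gaps. Any slack needed to reach the exact order $n$ is absorbed by pendants attached to a degree-$2$ vertex of some triangle, which preserves claw-freeness (the attachment vertex's three neighbours still contain the triangle edge) and affects neither $\gamma$ nor $\gamma_{11}$.

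In the regime $h+k\le n$ there is enough room to lay out $h$ disjoint blocks in sequence; a mixture of triangle and pentagon blocks (or a single cocktail-party block when $k-h$ is large relative to $h$) then hits the pair $(h,k)$ exactly. In the regime $3h+k+1\le 2n$ the situation is tighter in $h$, so the skeleton has to be more $\gamma$-dense: I would use a chain of $h$ triangles joined by shared edges (which achieves $\gamma=\gamma_{11}=h$ in roughly $2h+1$ vertices) and then upgrade some of those blocks, or attach a pentagonal or cocktail-party side gadget, to raise $\gamma_{11}$ up to the desired $k$. The two hypothesis conditions together cover precisely those triples for which at least one of these layouts fits within $n$ vertices.

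For each constructed $G$ the verification consists of (i) claw-freeness by a local check at each vertex of degree $\ge 3$; (ii) $\gamma(G)=h$ via an explicit dominating set plus the observation that each block has a private vertex forcing at least one code vertex per block; and (iii) $\gamma_{11}(G)=k$ via an explicit perfect dominating set of size $k$ plus a cascading lower bound argument using Proposition~\ref{technical}(5)--(6): any vertex outside the code with two code-neighbours must itself lie in the code, and within each pentagon or cocktail-party block this propagates until the required count is attained. The main obstacle is exactly this last lower bound---one must verify that the placement of bridge edges at degree-$2$ vertices of adjacent blocks prevents any cross-block shortcut allowing a $\gamma_{11}$-code to skip pentagon or cocktail-party vertices, so that the forcing is genuinely additive across blocks.
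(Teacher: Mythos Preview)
Your plan has a concrete error in the claw-freeness check, and this breaks the construction for the two non-trivial block types you rely on.

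You assert that placing a bridge at a degree-$2$ vertex ``keeps every vertex of degree $3$ inside a triangle''. This is true only if the two original neighbours of that vertex are adjacent. In $C_5$ they are \emph{not}: if $v\in V(C_5)$ has neighbours $a,b$ on the cycle and you add a bridge $vw$ to another block, then $\{a,b,w\}$ is independent and $v$ is the centre of a claw. So any connected graph built by bridging pentagon blocks fails to be claw-free. The cocktail-party block fails for a different but equally fatal reason: for $m\ge 3$ the graph $K_{2,\dots,2}$ has minimum degree $2m-2\ge 4$, so there is no degree-$2$ vertex to bridge from; and if you attach any external vertex $w$ to some $v$, then taking an antipodal pair $a,a'\in N(v)$ together with $w$ gives three pairwise non-adjacent neighbours of $v$, again a claw. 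Since $C_5$ and the cocktail-party graphs are exactly the blocks you use to create the gap $k-h>0$, the construction cannot produce the required connected claw-free graphs. (The paper works throughout with connected graphs, so a disjoint-union rescue is not available.)

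There are secondary issues as well: your ``chain of $h$ triangles joined by shared edges'' does not have $\gamma=h$ (two triangles sharing an edge already have a dominating vertex, so $\gamma=1$), and attaching several pendants to a single triangle either creates a claw (two pendants at one vertex) or changes $\gamma$ (pendants at two different vertices give the bull, with $\gamma=2$).

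For comparison, the paper's construction avoids all of this by working with complete graphs: in the case $h+k\le n$ it glues a $K_r$ and a $K_k$ at a single vertex and hangs $h-1$ pendants from distinct vertices of $K_k$; in the case $3h+k+1\le 2n$ it takes a single $K_r$ and attaches $s=n-k$ pendants and $h-s$ vertices of degree~$2$ (each joined to two vertices of $K_r$). Because every high-degree vertex has all but at most one of its neighbours inside a clique, claw-freeness is immediate, and the $\gamma$- and $\gamma_{11}$-codes can be read off directly.
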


\begin{proof}
First suppose that $h+k\le n$. Let $r=n-(h+k)+2$. We consider the graph $G$ formed by two complete graphs of order $r$ and $k$, sharing exactly one vertex $v$, and $h-1$ vertices of degree 1 pending from distinct vertices $u_1,\dots ,u_{h-1}$ of the complete graph $K_k$ different from $v$ (see Figure \ref{fig.clawfree}(a)).
Note that $G$ is a claw-free graph and since $r\ge 2$, the sets $\{ u_1,\dots ,u_{h-1}, v\}$ and $V(K_k)$ are respectively a $\gamma$-code with $h$ vertices and a $\gamma_ {\stackrel{}{11}}$-code with $k$ vertices.

Suppose now that $3\, h+k+1\le 2\, n$ and $h+k>n$. Let $r=n-h$ and $s=n-k$. Then,

$$2n\geq 3h+k+1\Rightarrow (n-h)+(n-k)\ge 2h+1\Rightarrow r+s > 2 h\Rightarrow  r > 2 h-s=s+2 (h-s)$$
where $s=n-k\ge 0$ and $h-s=h-(n-k)=(k+h)-n\ge 1$.
Therefore it is possible to construct the following graph $G$: consider the graph $K_r$, and $s+2\, (h-s)$ different vertices of $K_r$,  $u_1,\dots ,u_s, v_1,\dots ,v_{h-s},w_1,\dots ,w_{h-s}$. Attach a vertex of degree 1 to each vertex $u_1,\dots ,u_s$ and consider $h-s$ vertices $x_1,\dots ,x_{h-s}$ of degree $2$, where $x_i$ is adjacent to $v_i$ and $w_i$ (see Figure \ref{fig.clawfree}(b)).
Notice that $G$ is a claw-free graph and since $r>s+2 (h-s)$, the sets $\{ u_1,\dots ,u_s,v_1,\dots ,v_{h-s} \}$ and $V(K_r)\cup \{ x_1,\dots ,x_{h-s}\}$ are respectively a $\gamma$-code of $G$ with $h$ vertices and a $\gamma_ {\stackrel{}{11}}$-code of $G$ with $n-s=k$ vertices.
\end{proof}

\begin{figure}[!hbt]
\begin{center}
\includegraphics[width=0.6\textwidth]{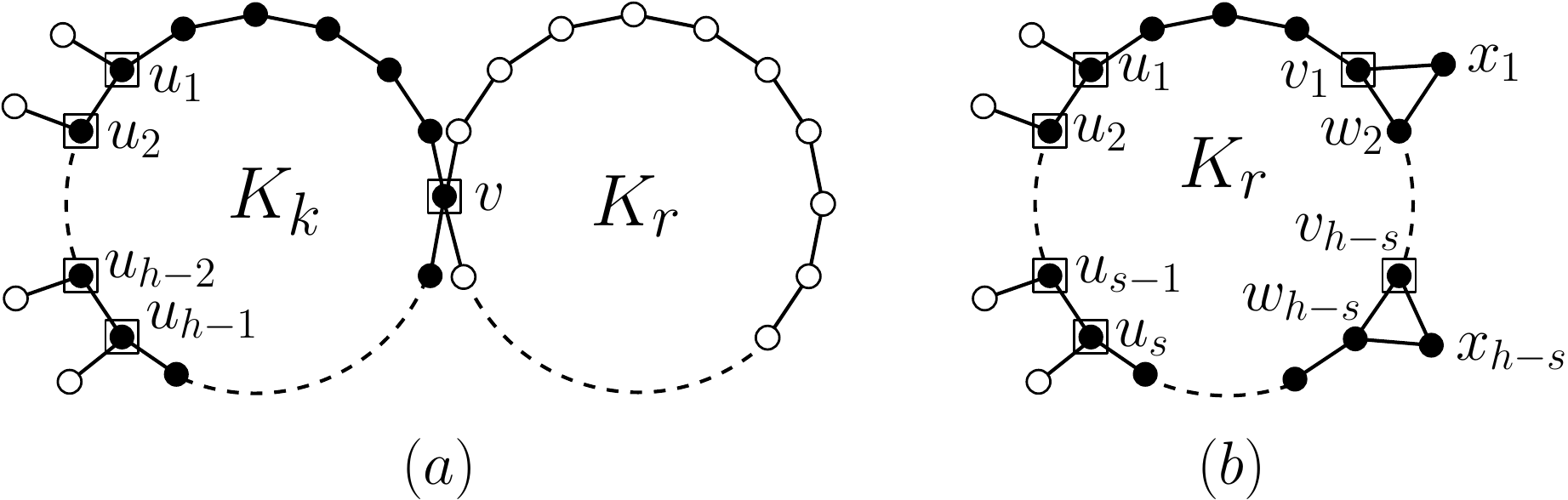}
\caption{Claw-free graphs on the proof of Theorem~\ref{claw-free th}. The set of squared vertices is a $\gamma$-code and black vertices form a $\gamma_ {\stackrel{}{11}}$-code.}\label{fig.clawfree}
\end{center}
\end{figure}

Conditions $h+k\le n$ or $3\, h+k+1\le 2\, n$ in Theorem~\ref{claw-free th} are sufficient to ensure that there exists a claw-free graph $G$ of order $n$ such that $\gamma (G)=h$ and $\gamma_ {\stackrel{}{11}} (G)=k$. There are some cases where the reverse is also true. For instance, if $G$ is a claw-free graph with $\gamma (G)=\frac{n}{2}$, $n$ even, then $G$ is the cycle $C_4$ or $G$ is the corona graph of a complete graph $K_m$ (see \cite{BaCoHa}) and $k=\gamma_ {\stackrel{}{11}}(G)=\frac{n}{2}$, so $h+k\leq n$. Also in the following proposition we show that they are necessary conditions, with just two exceptions, in the case of graphs with small order. So we think that the reverse of Theorem~\ref{claw-free th} could be true in a wider range of cases.

\begin{proposition}\label{small cases}
Let $h,k,n$ be integers such that $4\le n\le 7$  and $2 \le h \le k\le n$. Then, there exists a claw-free graph $G$ of order $n$ such that $\gamma(G)=h$ and $\gamma_ {\stackrel{}{11}}(G)=k$ if and only if $h+k\le n$ or $3h+k+1\le 2n$ or $(h,k,n)=(2,6,6)$.
\end{proposition}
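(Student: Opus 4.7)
The plan is to split along the biconditional. For sufficiency, when $h+k\le n$ or $3h+k+1\le 2n$, the construction already carried out in Theorem~\ref{claw-free th} produces a suitable claw-free graph, so only the exceptional triple $(h,k,n)=(2,6,6)$ needs a fresh witness. I would take the octahedron $G=K_{2,2,2}$, whose vertex set is partitioned into three antipodal pairs: each open neighborhood is isomorphic to $K_{2,2}$ and so has independence number $2$, showing $G$ is claw-free; any antipodal pair dominates the remaining four vertices while no vertex is universal, so $\gamma(G)=2$; and a direct case check by $|S|\in\{2,3,4,5\}$ (for $|S|=2$ the two vertices of some third part have two neighbors in $S$; for $3\le|S|\le 5$ a quick pigeonhole shows $S$ either contains an antipodal pair, forcing any outside vertex of another part to have $|N(v)\cap S|\ge 2$, or consists of one vertex per part, in which case each outside vertex has exactly two neighbors in $S$) confirms $\gamma_{11}(G)=6$.

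For necessity I would rule out every triple $(h,k,n)$ with $4\le n\le 7$ violating all three sufficient conditions. The first reduction is the classical domination bound $\gamma(G)\le\lfloor n/2\rfloor$ valid for every connected graph on $n\ge 2$ vertices, which forces $h\le 2$ for $n\in\{4,5\}$ and $h\le 3$ for $n\in\{6,7\}$. After this reduction the excluded triples that remain are $(2,3,4)$, $(2,4,4)$, $(2,4,5)$, $(2,5,5)$, $(3,4,6)$, $(3,5,6)$, $(3,6,6)$, $(3,5,7)$, $(3,6,7)$, and $(3,7,7)$.

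The three $n=6$, $h=3$ triples collapse structurally: $\gamma(G)=n/2$ triggers the Payan--Xuong characterization recalled right after Theorem~\ref{claw-free th}, which leaves only $G=K_3\circ K_1$ as the sole claw-free connected candidate on six vertices, and inspection yields $\gamma_{11}(K_3\circ K_1)=3$, excluding $k\ge 4$. For $n\in\{4,5\}$ the connected claw-free graphs with $\gamma=2$ form a short hand-checkable list (only $P_4$ and $C_4$ for $n=4$, both with $\gamma_{11}=2$; and $P_5$, $C_5$, the bull, the house, and a handful of other small fragments for $n=5$, each with $\gamma_{11}\le 3$), which disposes of the $n\le 5$ triples.

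The main obstacle is the residual $n=7$, $h=3$ range: here $\gamma$ does not attain $n/2$, so Payan--Xuong does not apply, and the connected claw-free graphs on seven vertices with $\gamma=3$ are too numerous for a tidy hand argument. My plan is to follow the precedent set in the proof of Theorem~\ref{th:gamma3} and close these cases by an exhaustive computer search (SageMath as in~\cite{sage}), verifying that $\gamma_{11}\le 4$ in every such graph and thereby ruling out $k\in\{5,6,7\}$.
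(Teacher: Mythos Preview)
Your proposal is correct, and your witness $K_{2,2,2}$ for $(2,6,6)$ is valid (and may well coincide with the paper's Figure~\ref{8a}). However, your route through the necessity direction differs from the paper's in a way worth flagging.

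The paper does not use Payan--Xuong at all, nor does it enumerate small claw-free graphs by hand for $n\le 5$. Instead it classifies uniformly by $\Delta(G)$: the extremes $\Delta=n-1$ and $\Delta=2$ are immediate, the case $\Delta=3$ is dispatched for every $n$ in one stroke by Theorem~\ref{caseDelta=3} (giving $\gamma_{11}\le n-3$, hence $h+k\le n$), and the cases $\Delta\in\{n-2,n-3\}$ force $h=2$, after which $3h+k+1\le 2n$ is automatic unless $(h,k,n)=(2,6,6)$. This leaves a single residual cell---$n=7$, $\Delta=4$, $h=3$---for which the computer search returns exactly three graphs (Figures~\ref{cf7_1}--\ref{cf7_3}), each with $k\le 4$. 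Your Payan--Xuong argument for $(n,h)=(6,3)$ is a pleasant alternative, and your broader computer sweep over all seven-vertex claw-free graphs with $\gamma=3$ would certainly succeed, but the paper's $\Delta$-stratification buys a much smaller machine check and reuses Theorem~\ref{caseDelta=3}, which was proved earlier for exactly this purpose.
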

\begin{proof}

Firstly, if $h,k,n$ satisfy hypothesis then, using Theorem~\ref{claw-free th} we obtain the desired graphs, except in case $(h,k,n)=(2,6,6)$, that is shown in Figure~\ref{8a}.

Conversely suppose that $G$ is a claw-free graph with order $n$ and such that $\gamma(G)=h$ (so $h \le \frac{n}{2}$) and $\gamma_ {\stackrel{}{11}}(G)=k$ with $4\le n\le 7$.
If $\Delta (G)=n-1$ then $h=k=1$, which is not our case. If $\Delta(G)=2$, then $G$ must be the n-cycle or the n-path, with $4\le n\le 7$, and it is easy to check that $h+k\le n$ in all cases. This completely solves the case $n=4$. In the remaining cases we classify graphs using the maximum degree $\Delta(G)$ where $3\le \Delta(G)\leq n-2$.

If $n=5$ then $h=2$. The only case we have to check is $\Delta(G)=3$, so $G$ is the bull graph that satisfies $h=2$, $k=3$ or $G$ is not the bull graph and using Theorem~\ref{caseDelta=3}, $h=2$, $k=n-3=2$. In both cases $h+k\le n$.

If $n=6$ then $2\le h \le 3$. If $\Delta (G)=3$, Theorem~\ref{caseDelta=3} gives $k\le n-3=3$ that implies $h+k\le n$. If $\Delta(G)=4$ then $h=2$ and we distinguish to options: $k=6$ implies $(h,k,n)=(2,6,6)$ and $k\le 5$ means $3h+k+1\le 3\cdot 2+5+1=12=2\,n$.

If $n=7$ then again $2\le h \le 3$. In the case $\Delta(G)=3$, using Theorem~\ref{caseDelta=3}, we obtain $k\le n-3=4$ so $h+k\le n$. If $\Delta (G)=4$ and $h=2$ then $3\, h+k+1\le 3\cdot 2+7+1=2\,n$. On the other hand it is easy to check (\cite{sage}) that there are exactly three claw-free graphs of order $7$ with $\Delta (G)=4$ and $h=3$ (see Figures~\ref{cf7_1},~\ref{cf7_2},~\ref{cf7_3}) and they satisfy $3\le k\le 4$, so $h+k\le n$. Finally $\Delta(G)=5$ implies $h=2$ and $3\, h+k+1\le 3\cdot 2+7+1=2\,n$.
\end{proof}

\begin{figure}[htbp]
  \begin{centering}
  \subfigure[$n=6$, $h=2$, \newline $k=6$.\label{8a}]{ \includegraphics[width=0.15\textwidth]{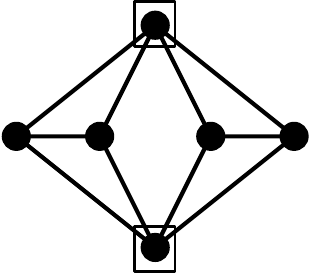}} \hspace{1.5cm}
  \subfigure[$n=7$, $\Delta=4$, \newline $h=3$, $k=4$.\label{cf7_1}]{ \includegraphics[width=0.15\textwidth]{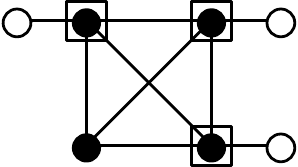}} \hspace{1.5cm}
  \subfigure[$n=7$, $\Delta=4$, \newline $h=3$, $k=3$.\label{cf7_2}]{ \includegraphics[width=0.15\textwidth]{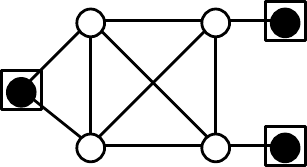}} \hspace{1.5cm}
\subfigure[$n=7$, $\Delta=4$, \newline $h=3$, $k=3$.\label{cf7_3}]{ \includegraphics[width=0.15\textwidth]{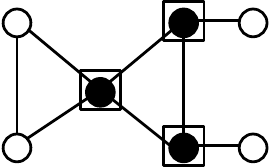}} \hspace{1.5cm}
\caption{Squared vertices are a $\gamma$-code and black vertices are a $\gamma_ {\stackrel{}{11}}$-code.}\label{cfs}
  \end{centering}
\end{figure}

\section*{Acknowledgements}
The authors would like to thank professor Carlos Seara for his continuous support and feedback during the realization of this paper.

\end{document}